\documentclass{gtmon_a}
\pdfoutput=1

\usepackage{amscd}


\proceedingstitle{The Zieschang Gedenkschrift}
\conferencestart{5 September 2007}
\conferenceend{8 September 2007}
\conferencename{Conference in honour of Heiner Zieschang}
\conferencelocation{Toulouse, France}

\editor{Michel Boileau}
\givenname{Michel}
\surname{Boileau}

\editor{Martin Scharlemann}
\givenname{Martin}
\surname{Scharlemann}

\editor{Richard Weidmann}
\givenname{Richard}
\surname{Weidmann}

\title{Minimizing the number of Nielsen preimage classes}

\author{Olga Frolkina}
\givenname{Olga}
\surname{Frolkina}
\address{
Chair of General Topology and Geometry\\
Faculty of Mechanics and Mathematics\\\newline
M\,V\,Lomonosov Moscow State University\\\newline
Leninskie Gori, 119991 Moscow, GSP-1\\Russia
}
\email{frolkina@mech.math.msu.su}
\email{odfrolki@mail.ru}
\urladdr{}

\dedicatory{I feel very honoured to have the possibility to contribute a paper
to this volume dedicated to the memory of
an outstanding mathematician and 
a pleasant good-humoured person: Heiner Zieschang.
In~2002--2003 in M\,V\,Lomonosov Moscow State University
Heiner gave a series of 
lectures on fixed points and coincidence theory, 
which I was lucky to attend. 
In the same period I learned the German language at his seminars. 
During a nice voyage in summer~2003 
from Moscow to Saint Petersburg,
in which I was invited to take part,
I made the acquaintance with
his wife Ute and daughter Kim;
two years later I met his other daughter Tanja.
Heiner guided my study of coincidences,
intersections and preimages 
during my visit
in November--December 2004 in
Ruhr-Universit\"at Bochum.
It was planned, to continue the project in~2005.
But that hope was doomed to disappointment\dots.}

\volumenumber{14}
\issuenumber{}
\publicationyear{2008}
\papernumber{011}
\startpage{193}
\endpage{217}

\doi{}
\MR{}
\Zbl{}

\arxivreference{} 

\keyword{preimage problem}
\keyword{Nielsen preimage class}
\keyword{topological essentiality}
\keyword{Nielsen preimage number}
\keyword{minimum number of preimage classes}
\subject{primary}{msc2000}{54H99}
\subject{primary}{msc2000}{55M99}
\subject{secondary}{msc2000}{55S35}

\received{30 March 2006}
\revised{4 March 2007}
\accepted{18 April 2007}
\published{29 April 2008}
\publishedonline{29 April 2008}
\proposed{}
\seconded{}
\corresponding{}
\version{}



\AtBeginDocument{\let\tilde\wtilde\let\hat\what}
\AtBeginDocument{\renewcommand\R{\operatorname{R}}}
\AtBeginDocument{\def\S{Section }}
\def\smallsim{\hbox{\footnotesize{$\sim$}}}


\newtheorem{predl}{Proposition}
\newtheorem{theorem}{Theorem}
\newtheorem{cor}{Corollary}
\newtheorem{lemma}{Lemma}

\theoremstyle{definition}
\newtheorem{defin}{Definition}
\newtheorem{rem}{Remark}

\makeautorefname{predl}{Proposition}

\def\Coin{\operatorname{Coin}}
\def\id{\operatorname{id}}

\def\NT{\operatorname{N}_{\text{\rm t}}}
\def\MP{\operatorname{MP}}

\def\RR{\operatorname{\mathcal R}}

\def\MPCL{\operatorname{MP}_{\text{\rm cl}}}


\begin{document}

\begin{asciiabstract}
We find conditions on topological spaces X, Y and nonempty subset B of
Y which guarantee that for each continuous map f from X to Y there
exists a map g homotopic to f such that Nielsen preimage classes of
g^{-1}(B) are all topologically essential.
\end{asciiabstract}
$\phantom{9}$   
\begin{htmlabstract}
We find conditions on topological spaces X, Y and nonempty subset
B of Y which guarantee that for each continuous map
f:X&rarr; Y there exists a map g&sim; f such that
Nielsen preimage classes of g<sup>-1</sup>(B) are all topologically
essential.
\end{htmlabstract}

\begin{abstract}
We find conditions on topological spaces $X$, $Y$ and nonempty subset
$B$ of $Y$ which guarantee that for each continuous map
$f\colon\thinspace X\to Y$ there exists a map $g\sim f$ such that
Nielsen preimage classes of $g^{-1}(B)$ are all topologically
essential.
\end{abstract}

\maketitle

\section{Introduction}\label{Section 1}

Let $f\co X \to Y$ be a continuous map of topological spaces
and $B$ be a nonempty subsets of~$Y$.
The so-called preimage problem considers the
preimage set $f^{-1}(B)$, that is, $\{ x\in X | f(x)\in B\}$.
The minimization problem 
in its ``classical'' setting
is to compute 
or at least find a good (lower) estimate for
the number
$$\MP (f,B) = \min\limits_{g\smallsim f} | g^{-1}(B) | ,$$
where $|g^{-1}(B)|$ is the cardinality of the set $g^{-1}(B)$
and minimum is taken over all maps $g$ homotopic to $f$.

This problem was considered in detail 
by Dobre{\'n}ko and Kucharski \cite{dk} (see also Schir\-mer \cite{Koinz} and
Jezierski \cite{preim-top}); if $B$ is a point, the problem is called
the root problem and dates back to Hopf \cite{hopf1,hopf2}.
Other problems of this type are, for example, fixed point, coincidence and 
intersection problems; see Jiang \cite{jiang}, Bogaty{\u\i}, Gon\c{c}alves and Zieschang \cite{bgz} and McCord \cite{int}
(\cite{mccord} for their interrelations).
All these problems can be attacked by a Nielsen-type technique.
So, in order to estimate the number $\MP (f,B)$,
the preimage set $f^{-1}(B)$ is divided into equivalence
classes bearing the name of Nielsen. 
The Nielsen number $\NT (f,B)$ is the number of so-called
(topologically; therefore we put a 
straight letter ``t'' in the notation,
which is not to be mixed up with a homotopy or isotopy parameter) 
essential classes. 
It is a homotopy invariant, and 
$\NT (f,B) \leqslant \MP (f,B)$ \cite[Theorem (1.9)]{dk}.
If $\NT (f,B) = \MP (f,B)$, it is said
that the setting $f\co X\to Y\supset B$  
has the Wecken property, or
a Wecken type theorem holds true.
Assuming $\NT (f, B) < \infty $ 
(see eg \fullref{Nielsen-finite} 
and \fullref{Nielsen-finite-DK}),
the Wecken property holds iff
there exists a map $g\sim f$ which has exactly
$\NT (f,B)$ Nielsen classes, moreover, each of them contains
only one point.

For $X$, $Y$, $B$ 
manifolds with $\dim X= \dim Y-\dim B\geqslant 3$
Wecken type theorems hold true; see Dobre{\'n}ko and Kucharski
\cite[Theorem~(3.4)]{dk}, Jezierski \cite[Theorem~(3.2)]{preim-top} and 
Frolkina \cite{fro2,fro1}
for maps of pairs of smooth manifolds.
The case of a surface $X$ is more complicated. 
If $Y$ is also a surface and $B$ is a finite set,
this problem is solved; see Bogaty{\u\i}, Gon\c{c}alves,  Kudryavtseva and Zieschang \cite[p~17, Remark~(e)]{bgkz}.
But, as it is noted by McCord \cite[p~175]{mccord},
for each surface~$S$ of negative Euler characteristic
reasoning of Jiang \cite{jiang-i,jiang-ii} 
provides examples of preimage problems of the form
$f\Delta \id_S \co S\to S\times S\supset \Delta S$
($\Delta S$ is the diagonal) such that
$
1=\NT (f\Delta \id _S, \Delta S) <
\MP (f\Delta \id _S, \Delta S) =2
$.
If $\dim X > \dim Y - \dim B$,
the number $\MP (f,B)$ is ``usually'' infinite;
we could consider instead the minimal number 
of path components of the preimage set, 
as is done by Koschorke \cite{kos1} for coincidences.
We will not deal with such a problem here.

For general topological spaces, obtaining Wecken type theorems seems 
to be a very complicated problem whose solution 
depends on concrete spaces and maps; see 
Brooks \cite[p~102]{delta}, \cite[Example~(3.15)]{roots-review} for the
root problem.
In this paper
we will consider (for preimage case)
the following question:
does there exist a map $g\sim f$
which has exactly $\NT(f,B)$ Nielsen classes
(not necessary consisting of one point each).
That is, defining
$$
\MPCL (f,B) = \min\limits_{g\smallsim f}
|\{ \text{Nielsen classes of } g^{-1}(B) \} | ,
$$
we have 
$\NT(f,B)\leqslant 
\MPCL (f,B)\leqslant \MP(f,B)$, and the question
concerns exactness of the left inequality.
But we will not restrict ourselves to the 
case $\NT (f,B) < \infty $.
Therefore we extend the posed question as follows:
does
there exist a map $g\sim f$, whose
Nielsen classes are all topologically essential.
This is not always possible; a counterexample
can be obtained converting
(see McCord \cite{mccord} 
or the end of \fullref{Section 1})
the coincidence problem of
\cite[Example~2.4]{gw} 
into the preimage problem.
For coincidences and roots, 
this problem was introduced by Brooks
\cite{delta}, who stated sufficient 
conditions for a positive answer.
(Refer also
to the paper Gon\c{c}alves and Aniz \cite{ga} 
devoted to the question of simultaneous
minimization of a number of points in all root classes.)
Our main theorem (\fullref{minim}) unites and generalizes the results 
\cite[Theorems~1,2]{delta}. 
We also discuss in detail Nielsen classes 
and Nielsen number.

Before starting, we would like to underline the following.
The classical Nielsen fixed point number
(the idea of its definition belongs to J\,Nielsen \cite{Nielsen})
is the number of \textit{algebraically} essential
fixed point classes, that is, classes of nonzero index.
Local fixed point index is defined
for maps of compact metric ANRs; but
there does not seem to be a well-developed index theory
for the root problem, except in, for example, the manifold case
(see Brooks
\cite[pp~376, 381--382, section~4]{roots-review} and
Gon\c{c}alves \cite[p~24]{gonc-coinc}).
Therefore Brooks prefers to use the notion of \textit{topological} 
essentiality which does not depend on the existence of 
local root index. Since the root problem is a particular case
of the preimage problem, by the same reason we prefer to consider
topological essentiality rather than algebraic;
the corresponding ``topological'' Nielsen number was defined
in Dobre{\'n}ko and Kucharski \cite{dk}. (Note that for $X$, $Y$, $B$ manifolds with
$\dim X = \dim Y - \dim B \geqslant 3$ the two notions of
essentiality coincide \mbox{\cite{dk,preim-top}}.) 
It is defined for arbitrary spaces; in
particular, this allows us to omit compactness assumptions.

\subsection*{Conventions and notation}

Throughout this paper 
spaces $X$, $Y$ 
are Hausdorff, connected, locally
path connected; moreover,
$Y$ is semilocally simply connected;
$B$ is a nonempty subset of $Y$;
the same is suggested for $X^\prime $, $Y^\prime $, $B^\prime $.
In our main statements we will additionally repeat this
and, if necessary, require something else.

For topology of infinite polyhedra, the 
reader should refer to Spanier \cite[Chapter~3]{sp};
when we make use of a concrete theorem,
we will give a more detailed reference.
For a polyhedron~$X$ 
we denote by~$X^{(n)}$ its $n$--skeleton;
$I$ is the unit segment $[0,1]$.

As usual, all covering spaces are (assumed to be) connected.

All maps are assumed to be continuous.
By $\id _X$ we denote the identity map of a space $X$;
$fg$ is the composition of maps~$f$ and~$g$;
$\Delta \{ f_i \}$ is the diagonal product of the 
family of maps $\{ f_i \}$;
for $A\subset X$ and an integer $r$ the symbol $\Delta A \subset X^r$
is used for the image of $A$ under 
the diagonal product of $r$ embeddings
$A\hookrightarrow X$.
For a subset $A\subset X\times I$ by its $t$--section, where
$t\in I$, we mean the set $A \cap X \times \{ t\}$.
Speaking about homeomorphisms and homotopy equivalences
of triples of spaces, 
we mean of course morphisms of the appropriate category.

For a homotopy $\{ f_t\} \co X\to Y$ and a path $\alpha \co I\to X$,
by $\{ f_t \alpha(t) \}$ we denote clearly a path 
$F (\alpha \Delta \id _{I})$
in $Y$, where 
$F\co X\times I\to Y$ is given by $(x,t)\mapsto f_t(x)$.

The symbol $\sim $ means homotopy of maps and
homotopy of paths relative to end points; 
$[\alpha ]$ is the homotopy class of a path $\alpha $
(again relative to end points); 
by $\alpha \cdot \beta $ 
we denote the product of paths $\alpha $ and $\beta $
with $\alpha (1) = \beta (0)$ and by 
$[\alpha ]\cdot [\beta ]$ the product of their homotopy classes.

For a map $f\co (X,x_0)\to (Y,y_0)$ we denote by
$f_{\# }\co \pi _1(X,x_0)\to \pi _1(Y,y_0)$ the induced
homomorphism.
If in the notation of (relative) homotopy
groups $\pi_m (X,A,x_0)$ (also for the set $\pi_1 (X,A,x_0)$) 
we omit the base point, 
we have in mind that $A$ is (suggested to be) path connected.

We use singular (co)homology with coefficients in local systems
of groups; if no coefficients are designated, 
they are usual (``constant'') integers.

Other notation is either standard or is introduced in the
text.

\subsection*{Statement of the main theorem}

\begin{theorem}\label{minim}
Suppose that
the spaces $X$, $Y$ are
connected and locally path connected;
moreover, $Y$ is semilocally simply connected, and
$B$ is a nonempty locally path connected closed subspace.
Suppose that for some integer $n\geqslant 3$
the space $X$ is dominated by 
a polyhedron of dimension less or equal to~$n$
and $\pi_m(Y,Y-B)=0$
for all $1\leqslant m\leqslant n-1$.
Then for each map $f\co X\to Y$ 
there exists a map $g\sim f$ such that
each Nielsen preimage class of $g\co X\to Y\supset B$ is
topologically essential; in particular,
$\NT (f,B) = \MPCL (f,B)$.
\end{theorem}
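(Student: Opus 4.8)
The plan is to reduce everything to the removal of the topologically inessential Nielsen classes by homotopies that are \emph{local}, i.e.\ supported in pairwise disjoint neighbourhoods of the individual classes. I would first record the structural facts needed to separate the classes. Since $B$ is closed, $f^{-1}(B)$ is closed in $X$; using local path connectedness of $X$ and of $B$ together with semilocal simple connectivity of $Y$, one verifies (as in the general theory) that each Nielsen preimage class is open and closed in $f^{-1}(B)$, so that the classes can be separated by pairwise disjoint open sets $U_\mathcal{C}\subset X$ whose closures meet $f^{-1}(B)$ only in the respective class $\mathcal{C}$. The task then splits into two pieces: showing that every inessential class can be eliminated inside its own $U_\mathcal{C}$ rel $X-U_\mathcal{C}$, and then performing all these eliminations simultaneously. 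The essential classes are untouched, and since essentiality is a homotopy invariant the resulting map $g\sim f$ has precisely the essential classes of $f$; this gives $\MPCL(f,B)=\NT(f,B)$, because $\NT\le\MPCL$ always holds (any $g\sim f$ has at least $\NT(f,B)$ classes, by homotopy invariance of $\NT$) and our $g$ realizes this lower bound.

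The heart of the argument is a \emph{local push-off lemma}: if $\mathcal{C}$ is inessential, then $f$ is homotopic rel $X-U_\mathcal{C}$ to a map whose image avoids $B$ throughout $U_\mathcal{C}$, i.e.\ $f|_{U_\mathcal{C}}$ is deformed into $Y-B$. Here the hypotheses are spent. Using the domination of $X$ by a polyhedron $K$ of dimension $\le n$, with maps $d\co X\to K$, $u\co K\to X$ satisfying $ud\sim\id_X$, I would model the relevant piece of $X$ on an $n$-dimensional complex and run obstruction theory for deforming a map into $Y-B$ rel its boundary (on $\partial U_\mathcal{C}$ the map already avoids $B$, since $\mathcal{C}\subset U_\mathcal{C}$). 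The successive obstructions live in the twisted groups $H^{m}(\,\cdot\,;\pi_m(Y,Y-B))$. For $1\le m\le n-1$ these vanish by the hypothesis $\pi_m(Y,Y-B)=0$, so the deformation proceeds through the $(n-1)$-skeleton with nothing forced; above dimension $n$ the obstruction groups vanish because domination by an $n$-polyhedron forces the cohomological dimension of $X$, hence of the local model, to be $\le n$. Thus a \emph{single} obstruction survives, in dimension $n$, and the content of topological essentiality is exactly that this top-dimensional obstruction, localized to $\mathcal{C}$, is the complete invariant deciding whether $\mathcal{C}$ can be pushed off $B$: it vanishes precisely when $\mathcal{C}$ is inessential. (The hypothesis $n\ge 3$ guarantees $\pi_1(Y,Y-B)=\pi_2(Y,Y-B)=0$, whence $\pi_1(Y-B)\to\pi_1(Y)$ is an isomorphism, so that the Nielsen combinatorics and the twisted coefficient systems are unambiguous, as in the classical codimension $\ge 3$ situation.)

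With the local lemma in hand I would assemble the global homotopy. Because the $U_\mathcal{C}$ are pairwise disjoint and each push-off is stationary outside its own $U_\mathcal{C}$ and near $\partial U_\mathcal{C}$, the individual homotopies combine into a single well-defined homotopy of $f$, even when there are infinitely many inessential classes; continuity is automatic, since at each point at most one of the homotopies is nontrivial. One then verifies that the construction creates no new preimage points (inside each $U_\mathcal{C}$ the deformed map avoids $B$, and outside nothing changes) and neither merges nor splits the essential classes (their neighbourhoods are disjoint from every $U_\mathcal{C}$, and the Nielsen correspondence along the homotopy is a bijection onto the essential classes of $f$).

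I expect the main obstacle to be the local push-off lemma, and specifically the identification of topological essentiality with the vanishing of the \emph{localized} top-dimensional obstruction. The definition of essentiality is phrased through the existence of \emph{some}, a priori global, homotopy removing the class, whereas the proof needs a homotopy supported near the class; converting one into the other, using only the connectivity hypothesis $\pi_m(Y,Y-B)=0$ for $m\le n-1$ and the dimension bound from domination, and \emph{without} any local root index (since we insist on topological rather than algebraic essentiality), is the delicate point. Secondary technical care is needed in replacing the possibly non-polyhedral $X$ by its $n$-dimensional model via the domination while tracking the twisted coefficients $\pi_m(Y,Y-B)$ and the base-point and path data that define the Nielsen classes.
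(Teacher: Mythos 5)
Your proposal diverges from the paper at its central step, and that step contains a genuine gap: the \emph{local push-off lemma}. Topological essentiality (\fullref{top-ess}) is defined through the existence of \emph{some} homotopy of $f$ on all of $X$ that deletes the class; you assert that for an inessential class one can choose the removing homotopy rel $X-U_{\mathcal C}$, supported in a prescribed neighbourhood, and that essentiality is detected by a ``localized'' top-dimensional obstruction --- but you give no argument for this, and none follows from the hypotheses. In obstruction-theoretic terms: inessentiality of $\hat f^{-1}(\hat B)$ yields (via \fullref{essent}) a \emph{global} deformation of the Hopf lift $\hat f$ off $\hat B$, hence the vanishing of the global obstruction $\omega_{C}\in H^n(X;\Gamma_{C})$; what your plan requires is the vanishing of a relative obstruction in $H^n(X,X-U_{\mathcal C};\Gamma_{C})$, which is strictly stronger --- the kernel of $H^n(X,X-U_{\mathcal C};\Gamma_{C})\to H^n(X;\Gamma_{C})$ is the image of $H^{n-1}(X-U_{\mathcal C};\Gamma_{C})$, and nothing in the hypotheses kills it. Identifying essentiality with a local invariant is essentially an index-type statement, and the paper deliberately avoids any local index because none exists for the preimage problem in this generality (this is exactly why it works with topological rather than algebraic essentiality); you flag this as ``the delicate point'' but leave it unproved, and it is the whole theorem.

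The paper's actual proof localizes in the \emph{target covering}, never in the domain. After reducing by (2) of \fullref{triangle} to the case where $X$ is itself a polyhedron of dimension at most $n$, and deforming $f$ so that $f^{-1}(B)\cap X^{(n-1)}=\emptyset$, it collects \emph{all} path components $C$ of $p^{-1}(B)$ whose preimage under $\hat f$ is empty or inessential into $D=\bigcup_{C\in\mathcal C}C$, and proves (\fullref{iso}, via Mayer--Vietoris, Seifert--van Kampen and the relative Hurewicz theorem in the universal cover, with path connectedness supplied by \fullref{conn}) that $\pi_m(\hat Y,\hat Y-D)=0$ for $m\leqslant n-1$ and
$$
\pi_n(\hat Y,\hat Y-D)\cong\sum_{C\in\mathcal C}\pi_n(\hat Y,\hat Y-C).
$$
Consequently the \emph{single} global obstruction $\omega_{D}\in H^n(X;\Gamma_{D})$ to deforming $\hat f$ off all of $D$ at once maps under an isomorphism to $\sum_{C}\omega_{C}=0$, so $\omega_{D}=0$ and one global homotopy removes every inessential class simultaneously --- no disjoint supports, no gluing, and no local characterization of essentiality is ever needed (the additivity happens among the coefficient groups $\pi_n(\hat Y,\hat Y-C)$, not among pieces of $X$). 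Your assembly step has a secondary flaw as well: compactness is not assumed, so there may be infinitely many inessential classes, and homotopies supported in pairwise disjoint open sets need not glue continuously (``at most one nontrivial at each point'' fails to give continuity at accumulation points of the supports); the paper's single-obstruction argument sidesteps this too. Note also that the paper's family $\mathcal C$ includes the components with \emph{empty} preimage, which is how it guarantees that the deformation creates no new classes --- a point your local construction would have to address separately.
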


This means that under the above conditions
we can delete all inessential preimage classes of $f$
\textit{at once} (recall that
each single inessential class can be deleted
by definition; see \fullref{top-ess} below).

\begin{rem}\label{pi1}
For a path connected space $Y$ and a subspace $B$,
${\pi _1(Y,Y-B)=0}$ 
if and only if 
$B$ can be bypassed in~$Y$;
see Schirmer \cite[Theorem~5.2]{rel-f}.
Recall from \cite[Definition~5.1]{rel-f} that
a subspace~$B\subset Y$ \textit{can be bypassed in $Y$}
if every path in $Y$ with end points in~$Y-B$ 
is homotopic to a path in~$Y-B$.
Recall also that
for a triangulated manifold~$Y$,
its boundary $\partial Y$
or a subpolyhedron $B$ with $\dim Y - \dim B \geqslant 2$
provide examples of subspaces which
can be bypassed \cite[p~468]{rel-f}.
\end{rem}

\begin{rem}
As in Brooks \cite[p~382--383]{roots-review}, 
recall the following well-known results.
If an $n$--dimensional paracompact space is dominated by a 
polyhedron, then it is dominated by a polyhedron of
dimension~$n$ or less; see Granas and Dugundji \cite[Theorem 17.7.16(c), p~483]{gd}.
Each ANR is dominated by a polyhedron 
by Hu \cite[Chapter~I, Exercise~R, p~32]{hu}
(see also Borsuk \cite[Corollary~(V.4.5)]{borsuk}).
\end{rem}

\begin{rem}
For arbitrary compact space~$X$,
other partial results of Wecken type (for roots)
can be found in Gon\c{c}alves and Wong~\cite{gw}.
\end{rem}

It is clear that the root problem is a particular case
of the preimage problem. But the
coincidence problem of $f,g\co X\to Y$ 
is also equivalent to the preimage problem
$f\Delta g\co X\to Y\times Y\supset \Delta Y$
(for details, see \fullref{many} of the present paper
and references there). 
Therefore the results \cite[Theorems~1,~2]{delta} 
can be derived from our \fullref{minim}. 

In order to prove \fullref{minim},
we firstly define and investigate 
Nielsen classes (see \fullref{pre-classes}). 
We will need special properties of Nielsen number
(see \fullref{triangle}) which
will be used also to prove homotopy
invariance (on spaces) of Nielsen number
(see \fullref{property}). 

\section{Nielsen classes}

Preimage points $f^{-1}(B)$ are divided into
so-called Nielsen preimage classes (we also
say simply ``preimage classes'' or ``Nielsen classes'';
sometimes we speak about classes of 
the problem $f\co X\to Y\supset B$ or of the map $f$).

\begin{defin}{\rm \cite[Definition~(1.2)]{dk}}\qua
Points
$x_0, x_1\in f^{-1}(B)$ are said to be
(Nielsen) equivalent if there are paths
$$\alpha \co (I,0,1)\to (X,x_0,x_1) \quad\text{and}\quad 
\beta \co (I,0,1)\to (Y,f(x_0),f(x_1))$$ 
such that $\beta (I)\subset B$ and $f \alpha \sim \beta $ 
(homotopy in~$Y$).
\end{defin}

Let $\{f_t \} \co X\to Y$ be a homotopy. 

\begin{defin}
A point $x_0\in f_0^{-1}(B)$ is said to be
$\{f_t\}$--related 
to a point $x_1\in f_1^{-1}(B)$
if there exist paths $\alpha \co (I,0,1)\to (X,x_0,x_1)$ 
and
$\beta \co (I,0,1)\to (Y,f_0(x_0), f_1(x_1))$ 
such that
$\beta (I) \subset B$
and
$\{ f_t \alpha (t) \} \sim \beta $ (homotopy in $Y$). 
\end{defin}

(Another equivalent definition of
the $\{ f_t\}$--relation is \cite[Definition~(1.6)]{dk};
see also Item (5) of \fullref{pre-classes} below.)

Note that two preimage points of $f\co X\to Y\supset B$
are Nielsen equivalent iff 
they are related by a constant homotopy $\{ f_t = f \}$.

It is clear that 
the following definition makes sense:

\begin{defin}
A preimage class
$A_0\subset f_0^{-1}(B)$ is $\{f_t\}$--related to a preimage 
class $A_1\subset f_1^{-1}(B)$ if at least one (and hence
every) preimage point $x_0\in A_0$ is
$\{f_t\}$--related to at least one (and therefore every) 
preimage point $x_1\in A_1$.
\end{defin}

Note that each preimage
class of $f_0$ is $\{ f_t\}$--related to at most one 
preimage class of $f_1$ but may not be 
$\{ f_t\}$--related to any
class of $f_1$; see \fullref{top-ess} below.

A useful tool in connection with Nielsen classes is the
following:

\begin{defin}
A Hopf covering and a Hopf lift 
for a map $f\co X\to Y$ are, respectively,
a covering $p\co \hat Y\to Y$ and 
a lift $\hat f\co X\to \hat Y$ of $f$ such that
$p_{\# }(\pi_1(\hat Y,\hat f (x))) = f_{\# }(\pi_1(X,x))$
for each $x\in X$.
\end{defin}

We use here the name of Hopf following Brooks \cite{roots-review},
since Hopf was the first who used such covering and lifts
in Nielsen root theory \cite{hopf2}.

If it is desirable to underline that the covering
$(\hat Y ,p)$ depends on a given map $f$, we will
write $(\hat Y_f , p_f)$ or $(\hat Y, p_f)$.

The following remarks (see Hopf \cite[\S 2]{hopf2} and
Brooks \cite[p~379]{roots-review}) will be used below.
Recall that we assume the conventions of \fullref{Section 1}.

\begin{predl}\label{Hopf-c-l}
\begin{itemize}
\item[(1)] 
Hopf coverings and lifts always exist.
\item[(2)] 
A Hopf covering is unique up to covering space isomorphism.
\item[(3)] 
For 
a Hopf covering $p_f\co \hat Y_f\to Y$ and
two Hopf lifts $\hat f^{(1)}$, $\hat f^{(2)}$ of $f$,
there exists a covering transformation $\rho \co \hat Y_f\to \hat Y_f$ 
such that
$\hat f^{(1)} = \rho \hat f^{(2)}$.
\item[(4)] 
If $(\hat Y_f,p_f)$ is a Hopf covering for $f$, then it is 
a Hopf covering for each map $f_1\sim f$.
\item[(5)] 
If $\hat f$ is a Hopf lift for $f$ and $\{ \hat f_t\}$ is a lift
of a homotopy $\{ f_t\} \co f_0=f\sim f_1$ 
such that $\hat f_0 = \hat f$,
then $\hat f_1$ is a Hopf lift for $f_1$.
\end{itemize}
\end{predl}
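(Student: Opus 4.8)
The plan is to reduce all five statements to two standard facts about the covering spaces of~$Y$, available here because $Y$ is connected, locally path connected and semilocally simply connected: the classification of connected coverings by conjugacy classes of subgroups of~$\pi_1(Y,y_0)$, and the lifting criterion (applicable to maps out of~$X$ since $X$ is connected and locally path connected). Throughout I fix a base point $x_0\in X$, write $y_0=f(x_0)$, and set $H=f_{\#}(\pi_1(X,x_0))\leqslant\pi_1(Y,y_0)$; the whole argument revolves around this subgroup~$H$.

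The one device I would isolate first, and then reuse everywhere, is a \emph{propagation principle}: if a lift $\hat h$ of a map $h\co X\to Y$ to a covering $p\co\hat Y\to Y$ satisfies the Hopf equality $p_{\#}(\pi_1(\hat Y,\hat h(x)))=h_{\#}(\pi_1(X,x))$ at the single point $x=x_0$, then it satisfies it at every $x\in X$. To prove it, choose (using path connectedness of~$X$) a path $\gamma$ from $x_0$ to~$x$; changing base point along $\gamma$ conjugates $\pi_1(X,x_0)$ by $[\gamma]$, along $h\gamma$ conjugates $\pi_1(Y,y_0)$ by $[h\gamma]$, and along the lifted path $\hat h\gamma$ conjugates the covering image subgroup by $[h\gamma]$ as well. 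The decisive point is that $h_{\#}$ carries conjugation by $[\gamma]$ to conjugation by $[h\gamma]$, so both sides of the Hopf equality are transported by the \emph{same} conjugation; equality at $x_0$ therefore forces equality at~$x$. This is the step I expect to carry the real content, since mere existence of a lift realising $H$ at one base point is immediate. Granting it, statement~(1) is quick: let $p\co\hat Y\to Y$ be the connected covering corresponding to~$H$, chosen so that some $\hat y_0$ over $y_0$ has $p_{\#}(\pi_1(\hat Y,\hat y_0))=H$, and lift $f$ by the lifting criterion to $\hat f$ with $\hat f(x_0)=\hat y_0$. The Hopf equality then holds at $x_0$ by construction, hence everywhere, so $\hat f$ is a Hopf lift and $(\hat Y,p)$ a Hopf covering.

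For~(2), any two Hopf coverings come with Hopf lifts whose values at $x_0$ realise the \emph{same} subgroup~$H$, so by the uniqueness half of the classification the two coverings are isomorphic. For~(3), the two lifts $\hat f^{(1)},\hat f^{(2)}$ send $x_0$ to two points of a single fibre that realise the same subgroup~$H$; hence a covering transformation $\rho$ with $\rho(\hat f^{(2)}(x_0))=\hat f^{(1)}(x_0)$ exists, and $\rho\hat f^{(2)}$ and $\hat f^{(1)}$ are then two lifts of~$f$ agreeing at~$x_0$, so they coincide by uniqueness of lifts.

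Finally I would prove~(5), from which~(4) is a corollary. Given the homotopy $\{f_t\}$ and its lift $\{\hat f_t\}$ with $\hat f_0=\hat f$, track the base point along $\delta(t)=f_t(x_0)$ and its lift $\hat\delta(t)=\hat f_t(x_0)$, so that $\hat\delta(1)=\hat f_1(x_0)$ lies over $f_1(x_0)$. Changing base point along $\hat\delta$ conjugates the covering subgroup by $[\delta]$, whereas the standard homotopy formula gives $(f_1)_{\#}(\pi_1(X,x_0))=[\delta]^{-1}(f_0)_{\#}(\pi_1(X,x_0))[\delta]$; since $\hat f_0$ is a Hopf lift these two conjugates agree, so the Hopf equality holds at $x_0$ for $\hat f_1$, and the propagation principle extends it over all of~$X$. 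Statement~(4) follows at once: lift any homotopy from $f$ to $f_1$ starting at the given Hopf lift and apply~(5). The only points demanding care are keeping the conjugation conventions for the three change-of-base-point maps consistent and invoking the homotopy lifting property of the covering; both are routine once the propagation principle is set up.
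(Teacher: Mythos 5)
Your proposal is correct and follows essentially the same route as the paper: the paper also reduces everything to the classification of coverings, the lifting criterion, uniqueness of lifts, and exactly your conjugation argument (it uses the reversed track $\omega(t)=f_{1-t}(x_0)$ where you use $\delta$, and proves (4) and (5) simultaneously by lifting the homotopy, which is what your ``(5) first, then (4)'' amounts to). Your only novelty is cosmetic but welcome: you isolate the base-point propagation step as an explicit reusable principle, where the paper proves it inside item (1) and then invokes it with ``as in the proof of (1)''.
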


\begin{proof}
(1)\qua
Fix an arbitrary point $x_0\in X$. 
Take a covering $p\co \hat Y\to Y$ that corresponds to the
subgroup $f_{\# }(\pi _1(X,x_0))\subset \pi_1(Y,f(x_0))$.
That is, for some point $\hat y_0\in p^{-1}(f(x_0))$
we have $p_{\# }(\pi_1 (\hat Y,\hat y_0))=f_{\# }(\pi _1(X,x_0))$
\cite[Theorem~2.3.6]{sp}.
Then by \cite[Theorem~2.4.5]{sp} there exists a lift
$\hat f\co X\to \hat Y$ of $f$ such that
$\hat f(x_0) = \hat y_0$. 
(Note that a lift $\hat f$ is not necessary uniquely determined, 
because of possible ambiguity in choice of $\hat y_0$.)
So, $p_{\# }(\pi_1 (\hat Y,\hat f(x_0)))=f_{\# }(\pi _1(X,x_0))$.
From path connectivity of~$X$ it follows
that the equality
$p_{\# }(\pi_1 (\hat Y,\hat f(x)))=f_{\# }(\pi _1(X,x))$
holds for each point $x\in X$.
Indeed, take a path $\gamma \co (I,0,1)\to (X,x,x_0)$; we have
\begin{align*}
p_{\# }(\pi_1 (\hat Y,\hat f(x)))&=
p_{\# }\left( [\hat f(\gamma )]  \cdot
\pi_1 (\hat Y,\hat f(x_0)) \cdot 
[\hat f (\gamma ^{-1})] \right)
\\
&=
[p_{\# }\hat f(\gamma )]  \cdot
p_{\# }(\pi_1 (\hat Y,\hat f(x_0))) \cdot
[p_{\# }\hat f (\gamma ^{-1}) ]
\\
&=
[f(\gamma )] \cdot
f_{\# }(\pi _1(X,x_0)) \cdot
[f (\gamma ^{-1})] =
f_{\# }(\pi _1(X,x)).
\end{align*}
(2)\qua This is clear; see eg Massey \cite[Corollary~V.6.4]{massey}.

(3)\qua Take an arbitrary $x_0\in X$. We have
$$
(p_f)_{\# } (\pi_1(\hat Y, \hat f^{(1)}(x_0))) =
(p_f)_{\# } (\pi_1(\hat Y, \hat f^{(2)}(x_0))).
$$
Hence \cite[Corollary~V.6.4]{massey} 
there exists a covering transformation $\rho $
of the covering $\smash{p_f\co \hat Y_f\to Y}$ such that
$\smash{\hat f^{(1)}(x_0) = \rho \hat f^{(2)}(x_0)}$.
Therefore 
$\smash{\hat f^{(1)} = \rho \hat f^{(2)}}$ \cite[Theorem~2.2.2]{sp}.

We prove (4) and (5) simultaneously, using notation
common for these two items: $(\hat Y_f,p_f)$ is a Hopf covering
for $f$, $\{ f_t\} \co f_0 = f \sim f_1$ a homotopy.
Take an arbitrary point $x_0\in X$.
According to \cite[Theorem~1.8.7]{sp}, we have
$$
(f_1)_{\# } (\pi _1(X,x_0)) = 
[\omega ] \cdot f_{\# } (\pi _1(X,x_0)) \cdot [\omega ^{-1}],
$$
where 
$\omega \co (I,0,1) \to (X, f_1(x_0), f(x_0))$ is defined by
$\omega (t) = f_{1-t} (x_0)$. 
Then, putting
$\hat\omega (t) = \hat f_{1-t} (x_0)$, we obtain
\begin{align*}
(p_f)_{\# } (\pi _1(\hat Y, \hat f_1(x_0))) &= 
(p_f)_{\# } ([\hat\omega ]\cdot \pi _1(\hat Y, \hat f(x_0)))
\cdot [\hat\omega ^{-1}] ) 
\\
&=
[\omega ]\cdot (p_f)_{\# }(\pi _1(\hat Y,\hat f(x_0))) \cdot
[\omega ^{-1}] \\&=
[\omega ]\cdot f_{\# }(\pi _1(X,x_0)) \cdot
[\omega ^{-1}] = (f_1)_{\# }(\pi_1(X,x_0)).
\end{align*}
The necessary statement follows now as in the proof of~(1).
\end{proof}

Returning to preimage classes, we obtain the
following description of Nielsen classes
and $\{ f_t\}$--relation 
(see Hopf \cite[Satz~III]{hopf2} and
Brooks \cite[Theorem~(3.4)]{roots-review} for roots):

\begin{theorem}\label{pre-classes}
Let $(\hat Y,p)$ and
$\hat f$ be a Hopf covering and a Hopf lift
for ${f\co X\to Y\supset B}$.
Let $\{ f_t \} \co X\to Y$ be a homotopy from $f_0=f$ to $f_1$
and $\{ \hat f_t \} \co X\to \hat Y$ its lift 
such that $\hat f_0 = \hat f$. 
Then
\begin{itemize}
\item[(1)]
two preimage points $x_0,x_1\in f^{-1}(B)$ are Nielsen
equivalent if and only if
the points $\smash{\hat f(x_0)}$, $\smash{\hat f(x_1)}$ lie in the
same path component of the set $p^{-1}(B)$;
\item[(2)]
Nielsen classes of 
$f\co X\to Y\supset B$ are precisely 
nonempty sets of the form ${\hat f} ^{-1}(C)$, where~$C$
is a path component of the set $p^{-1}(B)$;
\item[(3)]
a point $x_0\in f_0^{-1}(B)$ is $\{f_t\}$--related to a
point $x_1\in f_1^{-1}(B)$ if and only if 
the points $\smash{\hat f_0 (x_0)}$, $\smash{\hat f_1 (x_1)}$ 
are contained in the same path component
of the set $p^{-1}(B)$;
\item[(4)]
a preimage class
$A_0\subset f_0^{-1}(B)$ is $\{f_t\}$--related to a 
class $A_1\subset f_1^{-1}(B)$ if and only if
the sets
$\smash{\hat f_0 (A_0)}$ and $\smash{\hat f_1 (A_1)}$ are contained
in one path component of the set $p^{-1}(B)$;
\item[(5)] 
a preimage class
$A_0\subset f_0^{-1}(B)$ is
$\{f_t\}$--related to a class
$A_1\subset f_1^{-1}(B)$ 
if and only if $A_0$, $A_1$ are
$0$-- and $1$--sections of some preimage class
of 
$F\co X\times I \to Y\supset B$, where $F(x,t)=f_t(x)$.
\end{itemize}
\end{theorem}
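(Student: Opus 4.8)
The five assertions are tightly linked, so the plan is to prove the pointwise statement (1) together with its consequence (2) first, and then bootstrap the homotopy statements (3)--(5) from them by passing to the single map $F\co X\times I\to Y$, $F(x,t)=f_t(x)$. The geometric engine throughout is the correspondence between Hopf lifts and path components of $p^{-1}(B)$.

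For (1) I would argue both implications through the covering $p$. If $x_0,x_1$ are Nielsen equivalent, with witnessing paths $\alpha$ in $X$ and $\beta$ in $B$ satisfying $f\alpha\sim\beta$, then $\hat f\alpha$ is the lift of $f\alpha$ starting at $\hat f(x_0)$ and ending at $\hat f(x_1)$; lifting the homotopy $f\alpha\sim\beta$ shows the lift $\hat\beta$ of $\beta$ beginning at $\hat f(x_0)$ also ends at $\hat f(x_1)$, and since $\beta(I)\subset B$ this $\hat\beta$ lies in $p^{-1}(B)$ and joins the two points there. Conversely, given a path $\hat\beta$ in $p^{-1}(B)$ from $\hat f(x_0)$ to $\hat f(x_1)$, put $\beta=p\hat\beta\subset B$ and pick any path $\alpha_0$ in $X$ from $x_0$ to $x_1$. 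Then $\hat f\alpha_0\cdot\hat\beta^{-1}$ is a loop at $\hat f(x_0)$, so $[f\alpha_0\cdot\beta^{-1}]\in p_\#\pi_1(\hat Y,\hat f(x_0))=f_\#\pi_1(X,x_0)$. This is exactly where the defining property of the Hopf lift is used: it yields a loop $\delta$ at $x_0$ with $[f\delta]=[f\alpha_0\cdot\beta^{-1}]$, whence $\alpha=\delta^{-1}\cdot\alpha_0$ satisfies $f\alpha\sim\beta$ and exhibits the equivalence. Statement (2) is then immediate: since $f^{-1}(B)=\hat f^{-1}(p^{-1}(B))$, assertion (1) says that the class of $x$ is $\{x'\in f^{-1}(B)\mid \hat f(x'),\hat f(x)\text{ lie in one component of }p^{-1}(B)\}=\hat f^{-1}(C)$ for the component $C\ni\hat f(x)$, and distinct components give disjoint preimages partitioning $f^{-1}(B)$.

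To treat the homotopy versions I would bring in $F$, verify that $(\hat Y,p)$ is a Hopf covering for $F$ with Hopf lift $\hat F(x,t)=\hat f_t(x)$ — this uses only $\pi_1(X\times I,(x,t))\cong\pi_1(X,x)$ with $F|_{X\times\{t\}}=f_t$, together with \fullref{Hopf-c-l}(5) applied to the truncated homotopy $\{f_s\}_{s\in[0,t]}$ to get $p_\#\pi_1(\hat Y,\hat f_t(x))=(f_t)_\#\pi_1(X,x)$ — and prove the reduction lemma that $x_0$ is $\{f_t\}$--related to $x_1$ if and only if $(x_0,0)$ and $(x_1,1)$ are Nielsen equivalent preimage points of $F$. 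The lemma is the source of the economy: given witnessing data for the relation one reads off the path $s\mapsto(\alpha(s),s)$, whose $F$--image is $\{f_t\alpha(t)\}$; conversely any path $s\mapsto(\alpha(s),\tau(s))$ in $X\times I$ is homotopic rel endpoints to $s\mapsto(\alpha(s),s)$ since $\pi_1(I)=0$, so the $F$--images agree up to homotopy. Granting the lemma, (3) follows by applying (1) to $F$ and noting $\hat F(x_0,0)=\hat f_0(x_0)$, $\hat F(x_1,1)=\hat f_1(x_1)$.

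Finally (4) and (5) are formal. For (4), by (2) each class is $A_i=\hat f_i^{-1}(C_i)$ for a single component $C_i$, so $\hat f_i(A_i)\subset C_i$; by (3) a point of $A_0$ is related to a point of $A_1$ exactly when $C_0=C_1$, which is precisely the assertion that $\hat f_0(A_0)$ and $\hat f_1(A_1)$ sit in one component. For (5), apply (2) to $F$: its classes are the sets $\hat F^{-1}(C)$, whose $0$-- and $1$--sections are $\hat f_0^{-1}(C)$ and $\hat f_1^{-1}(C)$; matching this against (4) shows $A_0,A_1$ are related precisely when they are the two sections of the single $F$--class $\hat F^{-1}(C)$ with $C=C_0=C_1$. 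I expect the only genuinely delicate point to be the $(\Leftarrow)$ direction of (1), where the Hopf condition $p_\#\pi_1(\hat Y,\hat f(x))=f_\#\pi_1(X,x)$ is indispensable; the remainder is bookkeeping with covering lifts and the contractibility of $I$.
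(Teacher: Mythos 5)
Your proposal is correct, but it inverts the paper's logical architecture. The paper proves the homotopy statement (3) directly — lifting $\beta$ to $\hat\beta$, observing that $\{\hat f_t\alpha(t)\}\cdot\hat\beta^{-1}$ projects to a loop in $f_\#\pi_1(X,x_0)$, and correcting $\alpha$ by the resulting loop $\gamma$ — and then simply notes that $(3)\Rightarrow(1)\Rightarrow(2)$ and $(3)\Rightarrow(4)\Rightarrow(5)$, with (1) being the special case of a constant homotopy. You instead prove the static statement (1) first (by the identical covering-space computation, with $\hat f\alpha_0\cdot\hat\beta^{-1}$ in place of the track of the homotopy) and then bootstrap (3) from (1) via the cylinder map $F\colon\thinspace X\times I\to Y$, which costs you two verifications the paper never needs: that $\hat F(x,t)=\hat f_t(x)$ is a Hopf lift for $F$ (correctly handled via \fullref{Hopf-c-l}(5) applied at each intermediate time $t$, together with $F_\#\pi_1(X\times I,(x,t))=(f_t)_\#\pi_1(X,x)$), and the reparametrization lemma that any path $s\mapsto(\alpha(s),\tau(s))$ in the cylinder is homotopic rel endpoints to the graph path $s\mapsto(\alpha(s),s)$. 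Both verifications are sound. What the trade buys: the paper's route is shorter and keeps all the analysis in one lemma-free computation; yours makes the equivalence with the Dobre\'nko--Kucharski formulation — item (5), their Definition (1.6) — structurally central rather than a terminal corollary, since your reduction lemma is precisely the pointwise version of (5), so items (3) and (5) come out of the cylinder picture simultaneously rather than sequentially. The genuinely delicate step is, as you say, the converse direction where the Hopf condition $p_\#\pi_1(\hat Y,\hat f(x))=f_\#\pi_1(X,x)$ is used, and your treatment of it coincides with the paper's.
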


\begin{proof}
It is clear that $(3)\Rightarrow (1)\Rightarrow (2)$ 
and
$(3)\Rightarrow (4)\Rightarrow (5)$.
Let us prove~(3).

Suppose the points $x_0$, $x_1$ are $\{f_t\}$--related, that is,
for some paths
$\alpha \co (I,0,1)\to (X,x_0,x_1)$ and
$\beta \co (I,0,1)\to (B,f_0(x_0),f_1(x_1))$
we have
$[ \{ f_t  \alpha (t) \} ] = [ \beta ] $.
Let $\hat \beta \co (I,0,1) \to 
(\hat Y, \hat \beta (0)=\hat f_0 (x_0), \hat \beta (1))$ 
be the lift of $\beta $ into $\hat Y$ beginning 
at $\hat f_0 (x_0)$. 
Denote by $C$ the path component of
$p^{-1}(B)$ that contains $\hat f_0(x_0)$; 
we have $\beta (I)\subset C$.
The path $\{ \hat f_t  \alpha (t) \}$ is homotopic to $\hat \beta $
and starts at the same point $\hat f_0(x_0)$. 
Hence $\hat \beta (1) = \hat f_1(x_1)$, what implies 
$\hat f_1(x_1)\in C$.

To prove the converse, suppose
that the points $\hat f_0(x_0)$, $\hat f_1(x_1)$ lie in the 
same path component $C$ of $p^{-1}(B)$.
Consider arbitrary paths
$\alpha \co (I,0,1)\to (X,x_0,x_1)$
and
$\hat\beta \co (I,0,1)\to (\hat Y, \hat f_0(x_0), \hat f_1(x_1))$
with $\hat \beta (I) \subset C$.
Denoting $\beta = p\hat \beta $, we have $\beta (I) \subset B$.
Then $\{ \hat f_t \alpha (t)\} \cdot \hat\beta ^{-1}$ is a loop
at $\hat f_0(x_0)$.
Therefore the path 
$p (\{ \hat f_t  \alpha (t) \} \cdot \hat\beta ^{-1})$
is a loop at $f(x_0)$. By definition of Hopf covering,
there exists a loop $\gamma $ at $x_0$ in $X$ such that
$$
[ f\gamma ] = 
[p (\{ \hat f_t \alpha (t) \}\cdot \hat\beta ^{-1})] =
[ \{ p \hat f_t \alpha (t) \} \cdot
\beta ^{-1} ] = [\{ f_t \alpha (t) \} ] \cdot
[\beta ^{-1}].
$$
Consequently,
$$
[\{ f_t ((\gamma^{-1}\cdot \alpha )(t)) \} ] =
[f \gamma ]^{-1} \cdot [ \{ f_t \alpha (t) \} ] =
[\beta ].
$$
The last equality shows that $x_0$ 
is $\{ f_t \}$--related to~$x_1$.
\end{proof}

Note that (5) of \fullref{pre-classes} 
is taken in \cite[Definition~(1.6)]{dk} as a definition 
(the $\{ f_t\}$--relation is called there ``$F$--Nielsen relation'').

From this theorem, we derive a number of
simple corollaries.
The first one is evident;
it generalizes \cite[Theorem~(3.8)]{roots-review}
(see \fullref{top-ess} 
for the notion of topological essentiality):

\begin{cor}\label{essent}
Let $f\co X\to Y\supset B$ be a map, 
$(\hat Y_f,p_f)$ and $\hat f\co X\to \hat Y$
its Hopf covering and lift and
$\hat B$ a path component of $p^{-1}(B)$. 
Then
$\smash{\hat f ^{-1}(\hat B)}$ is
a
topologically essential 
preimage class 
if and only if
$\smash{\hat f_1 ^{-1} (\hat B)} \neq \emptyset $ for any
homotopy $\smash{\{\hat f_t\}}$ beginning at $\smash{\hat f_0=\hat f}$.
\end{cor}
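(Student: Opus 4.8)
The plan is to unwind the definition of topological essentiality (recalled in \fullref{top-ess}) and translate it, by means of \fullref{pre-classes}, into a statement about path components of $p^{-1}(B)$. By that definition the preimage class $A_0=\hat f^{-1}(\hat B)$ is topologically essential precisely when, for every homotopy $\{f_t\}$ starting at $f_0=f$, the class $A_0$ is $\{f_t\}$--related to some preimage class of $f_1$. So the first step is to set up the correspondence between homotopies of $f$ and their lifts: given any homotopy $\{f_t\}$ with $f_0=f$, the homotopy lifting property for the covering $p$ produces a lift $\{\hat f_t\}$ with $\hat f_0=\hat f$, and conversely every such lifted homotopy projects under $p$ to a homotopy of $f$ (since $p\hat f_0=p\hat f=f$). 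Thus the quantifier ``for every homotopy $\{f_t\}$'' may be replaced by ``for every lift $\{\hat f_t\}$ beginning at $\hat f$'', which is exactly the quantifier appearing in the statement.

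Next I would apply part~(4) of \fullref{pre-classes}: it says that $A_0$ is $\{f_t\}$--related to a class $A_1\subset f_1^{-1}(B)$ if and only if $\hat f_0(A_0)$ and $\hat f_1(A_1)$ lie in a single path component of $p^{-1}(B)$. Since $A_0=\hat f^{-1}(\hat B)$ we have $\hat f_0(A_0)=\hat f(A_0)\subset \hat B$, so the relevant path component is forced to be $\hat B$ itself; hence $A_0$ is $\{f_t\}$--related to $A_1$ if and only if $\hat f_1(A_1)\subset \hat B$, that is, $A_1\subset \hat f_1^{-1}(\hat B)$. The point requiring care here is that $\hat f_1$ is again a Hopf lift, now for $f_1$, by part~(5) of \fullref{Hopf-c-l}; therefore \fullref{pre-classes}(2) applies to $f_1$ and shows that the preimage classes of $f_1$ are exactly the nonempty sets $\hat f_1^{-1}(C)$ for path components $C$ of $p^{-1}(B)$. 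Consequently the unique candidate class of $f_1$ related to $A_0$ is $\hat f_1^{-1}(\hat B)$, and such a class exists if and only if this set is nonempty.

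Putting these together finishes the argument: $A_0$ is $\{f_t\}$--related to some class of $f_1$ if and only if $\hat f_1^{-1}(\hat B)\neq\emptyset$, so $A_0$ is topologically essential if and only if $\hat f_1^{-1}(\hat B)\neq\emptyset$ for every lifted homotopy $\{\hat f_t\}$ beginning at $\hat f_0=\hat f$, which is the asserted equivalence. (Taking the constant homotopy on the right-hand side recovers $\hat f^{-1}(\hat B)\neq\emptyset$, so the equivalence simultaneously certifies that $\hat f^{-1}(\hat B)$ is a genuine, nonempty preimage class.) I do not expect a serious obstacle; the only steps needing attention are the bookkeeping that $\hat f_1$ remains a Hopf lift so that \fullref{pre-classes}(2) may be reapplied at time~$1$, and the observation that $\hat f_0(A_0)\subset\hat B$ forces the target path component to be $\hat B$.
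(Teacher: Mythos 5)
Your proposal is correct and is exactly the argument the paper has in mind: the paper gives no written proof (it declares the corollary ``evident'' as a consequence of \fullref{pre-classes}), and your unwinding via the bijection between homotopies of $f$ and their lifts starting at $\hat f$, items (2)--(4) of \fullref{pre-classes}, and item (5) of \fullref{Hopf-c-l} is precisely the intended routine verification. You also correctly attend to the two genuinely checkable points --- that $\hat f_1$ remains a Hopf lift so \fullref{pre-classes}(2) applies at time $1$, and that nonemptiness of $\hat f^{-1}(\hat B)$ is recovered from the constant homotopy --- so nothing is missing.
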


Let us underline once more that in general
the Nielsen number $\NT (f,B)$ may be infinite.
We obtain now simple sufficient conditions for
its finiteness;
see Hopf \cite[Satz~II, IIa]{hopf2} and Brooks
\cite[Theorem~(3.5), Corollary~(3.6)]{roots-review} for roots.

\begin{cor}\label{clopen}
Suppose (additionally to our usual conventions) 
that $B$ is locally path connected. 
Then
each Nielsen class of $f\co X\to Y\supset B$ 
is both open and closed in $f^{-1}(B)$.
Hence, if $f^{-1}(B)$ is compact, 
then the number of Nielsen classes is finite.
\end{cor}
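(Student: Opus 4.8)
The plan is to transport the whole question to the Hopf covering and reduce it to a statement about path components of $p^{-1}(B)$, using the description of Nielsen classes in \fullref{pre-classes}(2). Fix a Hopf covering $p\co \hat Y\to Y$ and a Hopf lift $\hat f\co X\to \hat Y$ for $f$. Since $p\hat f=f$, we have $\hat f^{-1}(p^{-1}(B))=(p\hat f)^{-1}(B)=f^{-1}(B)$, so $\hat f$ restricts to a continuous map $\hat f\co f^{-1}(B)\to p^{-1}(B)$; and by \fullref{pre-classes}(2) the Nielsen classes are exactly the nonempty sets $\hat f^{-1}(C)$ with $C$ a path component of $p^{-1}(B)$. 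Thus it suffices to show that each such $C$ is simultaneously open and closed in $p^{-1}(B)$, for then continuity of the restriction makes $\hat f^{-1}(C)$ open and closed in $f^{-1}(B)$.

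The key step, and the only one requiring the extra hypothesis, is to prove that $p^{-1}(B)$ is locally path connected. Here I would use that a covering map is a local homeomorphism: each $\hat y\in p^{-1}(B)$ has an evenly covered neighbourhood $\hat U$ that $p$ carries homeomorphically onto an open set $U\ni p(\hat y)$, and this homeomorphism identifies $\hat U\cap p^{-1}(B)$ with $U\cap B$. Since $B$ is locally path connected, the point $p(\hat y)$ admits a neighbourhood base of path connected open subsets of $B$; pulling these back through the homeomorphism supplies a neighbourhood base of path connected open subsets of $p^{-1}(B)$ at $\hat y$. Hence $p^{-1}(B)$ is locally path connected.

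Once local path connectedness is in hand, the standard fact that in a locally path connected space the path components are open (each point lies in a path connected neighbourhood, which is contained in its own path component) finishes the first assertion: each path component $C$ is open, and its complement is the union of the remaining path components, hence also open, so $C$ is closed as well. Pulling back along the continuous restriction $\hat f\co f^{-1}(B)\to p^{-1}(B)$ then shows that every Nielsen class $\hat f^{-1}(C)$ is open and closed in $f^{-1}(B)$.

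For the final clause, the nonempty sets $\hat f^{-1}(C)$ form a partition of $f^{-1}(B)$ into pairwise disjoint open sets, that is, a disjoint open cover. If $f^{-1}(B)$ is compact, this cover admits a finite subcover, and disjointness forces the subcover to contain every nonempty member; so there are only finitely many Nielsen classes. The whole argument is routine point-set topology grafted onto \fullref{pre-classes}, and I expect the only mildly delicate point to be the careful verification in the second paragraph that the covering homeomorphism genuinely identifies $\hat U\cap p^{-1}(B)$ with $U\cap B$, so that local path connectedness descends from $B$ to $p^{-1}(B)$.
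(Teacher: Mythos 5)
Your proof is correct, but it follows a genuinely different route from the paper's. The paper argues directly in the base, never invoking the covering: given $x\in f^{-1}(B)$, it uses semilocal simple connectedness of $Y$ to choose a neighbourhood $V_1$ of $f(x)$ in which any two paths from $f(x)$ with common endpoint are homotopic in $Y$, local path connectedness of $B$ to choose a path connected $W\subset B\cap V_1$ open in $B$, and local path connectedness of $X$ to choose a path connected $U$ with $f(U)\subset V_1\cap V_2$ (where $W=B\cap V_2$); then any $y\in U\cap f^{-1}(B)$ is exhibited as Nielsen equivalent to $x$ by explicit paths $\alpha$ in $U$ and $\beta$ in $W$, so classes are open, hence also closed, and compactness finishes as in your last paragraph. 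You instead push everything upstairs via \fullref{pre-classes}(2): your key observation, that $p^{-1}(B)$ inherits local path connectedness from $B$, is sound --- on a sheet $\hat U$ over an evenly covered $U$ one indeed has $\hat U\cap p^{-1}(B)=(p|_{\hat U})^{-1}(B\cap U)$, so the verification you flag as delicate goes through, and path connected open subsets of $B$ pull back to a neighbourhood base in $p^{-1}(B)$; path components of a locally path connected space are clopen, and pulling back along the continuous restriction $\hat f\co f^{-1}(B)\to p^{-1}(B)$ gives the claim. Your version has the merit of matching the paper's own announcement that these corollaries are derived from \fullref{pre-classes}, and it isolates a reusable fact about coverings that echoes the paper's use, in \fullref{conn}, of the restriction of $p$ to a path component of $p^{-1}(B)$ being a covering; note also that your route implicitly consumes local path connectedness of $X$ and semilocal simple connectedness of $Y$ only through the existence of the Hopf covering and \fullref{pre-classes}. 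The paper's direct argument, by contrast, makes visible exactly where each standing hypothesis enters and produces the Nielsen equivalences explicitly, without any lifting machinery. Both proofs conclude the finiteness statement identically, from a pairwise disjoint open cover of a compact set.
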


\begin{proof}
The second statement is clear.
To prove the first,
it suffices to show that each Nielsen class is an
open subset of the preimage set.
Let $x\in f^{-1}(B)$.
Take an open neighbourhood $V_1\subset Y$ of $f(x)$
such that every two paths in $V_1$ 
starting at $f(x)$ and
having the same end points
are homotopic in $Y$. 
Let $W\subset B\cap V_1$ be an open (in $B$) 
path connected neighbourhood of $f(x)$. 
Then $W=B \cap V_2$ for some open set $V_2\subset Y$.
Put $V=V_1\cap V_2$. Take an open 
path connected neighbourhood $U\subset X$
of $x$ such that $f(U)\subset V$. Suppose $y\in U\cap f^{-1}(B)$.
Take paths $\alpha \co (I,0,1)\to (U,x,y)$
and $\beta \co (I,0,1)\to (W,f(x),f(y))$.
Then $f\alpha \co (I,0,1)\to (V,f(x),f(y))$ is
homotopic (in $Y$) to $\beta $; that is, the points $x$, $y$
belong to the same preimage class.
\end{proof}

\begin{cor}\label{Nielsen-finite}
Suppose (additionally to our usual conventions) 
that $B$ is locally path connected and
at least one of the following conditions holds:
\begin{itemize}
\item[(1)]
$X$ is compact, 
and $B$ is closed in $Y$;
\item[(2)]
$f$ is proper, and $B$ is compact.
\end{itemize}
Then the number of Nielsen classes is finite.
\end{cor}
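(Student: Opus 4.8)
The plan is to reduce both cases to the single observation that the preimage set $f^{-1}(B)$ is compact, after which \fullref{clopen} finishes everything at once. Recall that \fullref{clopen} already establishes, under the standing assumption that $B$ is locally path connected, that the Nielsen classes are disjoint open-and-closed subsets partitioning $f^{-1}(B)$, and that finiteness of their number follows as soon as $f^{-1}(B)$ is compact. So the entire task is to verify compactness of $f^{-1}(B)$ under either hypothesis (1) or hypothesis (2), and then simply invoke \fullref{clopen}.

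For case (1), I would argue as follows. Since $f$ is continuous and $B$ is closed in $Y$, the set $f^{-1}(B)$ is closed in $X$. As $X$ is compact, every closed subset of $X$ is compact, so $f^{-1}(B)$ is compact.

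For case (2), I would use the definition of a proper map: the preimage of every compact subset of $Y$ is compact. Applying this directly to the compact set $B$ yields that $f^{-1}(B)$ is compact. (Note that no separate closedness assumption on $B$ is required here: since $Y$ is Hausdorff, the compact subspace $B$ is automatically closed.)

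In either situation $f^{-1}(B)$ is compact, so the second statement of \fullref{clopen} applies and the number of Nielsen classes is finite. I do not expect any genuine obstacle in this argument; the only point demanding care is that the two hypotheses deliver compactness of $f^{-1}(B)$ by two genuinely different routes---closedness of $B$ together with compactness of the ambient $X$ in case (1), versus properness of $f$ together with compactness of the target set $B$ in case (2)---and that the desired finiteness is precisely the compact case already recorded in \fullref{clopen}.
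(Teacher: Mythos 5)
Your proof is correct and follows exactly the paper's own route: the paper likewise observes that in both cases $f^{-1}(B)$ is compact and then invokes \fullref{clopen}. You have merely spelled out the compactness verification (closed subset of a compact space in case (1), properness applied to the compact set $B$ in case (2)) that the paper leaves implicit.
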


\begin{proof}
In both cases, $f^{-1}(B)$ is compact. 
Application of \fullref{clopen} 
finishes the proof.
\end{proof}

\begin{rem}\label{Nielsen-finite-DK}
See the statement and proof of \cite[Theorem~(1.3)]{dk}
for a different list of conditions on spaces $X$, $Y$, $B$ which
also imply 
openness of Nielsen classes and finiteness of its number.
\end{rem}

\section[R-sets]{$\RR $--sets}\label{R-sets}

Take $f\co X\to Y\supset B$, its Hopf covering $(\hat Y,p)$, and 
a Hopf lift $\hat f$.
For each path component $\smash{\hat B}$ of $\smash{p^{-1}(B)}$, 
call its preimage $\smash{\hat f^{-1}(\hat B)}$ an $\RR $--set.
Some
$\RR $--sets may be empty, but we nevertheless
distinguish them through the path components $\hat B$
which define them.
That is, an $\RR $--set 
$\hat f ^{-1}(\hat B)$
is considered to carry a label $\hat B$.
The collection of all $\RR $--sets for a map $f$ and its
Hopf lift $\hat f$ is denoted by $\RR (f, \hat f)$.

Note (see Brown and Schirmer \cite[Remark~4.5]{rel-roots}) that the number of path
components of $p^{-1}(B)$ and hence of $\RR $--sets
equals the Reidemeister preimage number 
$\R (f,B)$, which is often useful for computation of 
the Nielsen number (see \fullref{top-ess}), and
this is the reason of the presence of the letter $\RR $
in the name of $\RR $--sets. 
We will not go into details,
but refer to Brooks \cite{roots-review}, for example, for roots.

From \fullref{pre-classes} the following proposition holds:

\begin{predl}
Nonempty $\RR $--sets (considered without labels) 
are exactly Nielsen classes.
\end{predl}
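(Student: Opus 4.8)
The plan is to read the statement as an immediate translation of \fullref{pre-classes}(2) into the $\RR$--set vocabulary just introduced. By the definition of $\RR$--set, a nonempty $\RR$--set, once its label $\hat B$ is forgotten, is precisely a nonempty subset of $f^{-1}(B)$ of the form $\hat f^{-1}(\hat B)$ with $\hat B$ a path component of $p^{-1}(B)$. On the other hand, \fullref{pre-classes}(2) asserts that the Nielsen classes of $f\co X\to Y\supset B$ are \emph{exactly} the nonempty sets $\hat f^{-1}(C)$ as $C$ ranges over the path components of $p^{-1}(B)$. These two descriptions single out the very same family of subsets of $X$, which is the assertion.

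First I would simply invoke \fullref{pre-classes}(2); no further argument is strictly needed, since the whole content is contained there. To make the matching bijective at the level of individual sets, I would add the elementary observation that distinct path components $\hat B_1\neq\hat B_2$ of $p^{-1}(B)$ are disjoint, so $\hat f^{-1}(\hat B_1)\cap\hat f^{-1}(\hat B_2)=\hat f^{-1}(\hat B_1\cap\hat B_2)=\emptyset$; hence two distinct nonempty $\RR$--sets are disjoint and, in particular, distinct as unlabelled subsets. Thus forgetting the labels on the nonempty $\RR$--sets recovers each Nielsen class once and only once.

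I do not expect any genuine obstacle here: the substantive work was already carried out in the proof of \fullref{pre-classes}, where the equivalence between Nielsen equivalence of preimage points and lying in a common path component of $p^{-1}(B)$ was established via the Hopf covering and lift. This proposition merely records that fact in the terminology of \fullref{R-sets}, so its proof consists of citing \fullref{pre-classes}(2) together with the disjointness remark above.
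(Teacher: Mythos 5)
Your proposal is correct and matches the paper exactly: the paper gives no separate argument either, deriving the proposition immediately from \fullref{pre-classes}\,(2), which is precisely your citation. Your added remark that distinct path components of $p^{-1}(B)$ yield disjoint (hence distinct) nonempty $\RR$--sets is a harmless and accurate refinement of the same observation.
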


We have already defined $\{f_t\}$--relation between Nielsen
classes. Now we will extend this relation to $\RR $--sets.

Let $\{ f_t\} \co f_0\sim f_1$ be a homotopy.
By \fullref{Hopf-c-l}, a Hopf covering
$(\hat Y,p)$ constructed for $f_0$ 
is also a Hopf covering for $f_1$. 
Further, if $\hat f_0$ is a Hopf lift for $f_0$, 
and $\{ \hat f_t \}$ is a lift of the homotopy $\{ f_t \}$
starting at this $\smash{\hat f_0}$, 
then $\smash{\hat f_1}$ is a Hopf lift for $f_1$. 
The homotopy $\{f_t\}$ induces therefore a bijection between
$\RR $--sets of $f_0$ and $f_1$ by the following rule:
$$
\RR (f_0, \hat f_0) \leftrightarrow
\RR (f_1, \hat f_1), 
\quad
\hat f_0 ^{-1}(\hat B) \leftrightarrow \hat f_1 ^{-1}(\hat B),
$$
for each path component $\hat B$ of $p^{-1}(B)$.
From \fullref{pre-classes} it follows that
for Nielsen classes (equivalently, nonempty $\RR $--sets
considered without labels)
this is just an $\{ f_t \}$--relation.
Hence it gives a bijection between the sets 
of all topologically essential
preimage classes of $f_0$ and $f_1$
(see \fullref{top-ess}).

We will need the following lemma.

\begin{lemma}\label{q}
Suppose that the diagram
$$
\begin{CD}
X @> f >> Y
\\
@V \varphi VV @| 
\\
X^\prime @> f^\prime >> Y 
\end{CD}
$$
commutes; let 
$(\hat Y_{f}, p_{f})$,
$(\hat Y_{f^\prime }, p_{f^\prime })$ and
$\hat f$, $\hat f^\prime $
be Hopf coverings and Hopf lifts for $f$, $f^\prime $.
Then
\begin{itemize}
\item[(1)]
there exists a unique covering 
$q\co \hat Y_{f} \to \hat Y_{f^\prime }$
such that
$q \hat f = \hat f^\prime \varphi $
and 
$p_f = p_{f^\prime } q$;
\item[(2)]
a map
$\RR (f , \hat f)
\to \RR (f^\prime , \hat f^\prime ) $,
$\hat f ^{-1} (\hat B ) \mapsto
(\hat f^\prime )^{-1} (q \hat B ) $,
where $\hat B \subset \hat Y_f $ is a path component 
of $p_f ^{-1}(B)$, is well-defined;
this map brings a nonempty $\RR $--set 
(considered without label), that is,
a preimage class $A$ of the problem 
$f\co X\to Y\supset B$,
to that (nonempty) $\RR $--set, that is, preimage
class $A^\prime $ of $f^\prime \co X^\prime \to Y \supset B$,
which contains $\varphi (A)$.
\end{itemize}
\end{lemma}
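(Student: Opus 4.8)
The plan is to build the covering $q$ from the covering--space lifting criterion and then read off both assertions. First I fix a base point $x_0\in X$ and set $x_0'=\varphi(x_0)$. Since the square commutes, $f=f'\varphi$, so $f(x_0)=f'(x_0')$ and the two points $\hat f(x_0)$, $\hat f'(x_0')$ both lie over it. The Hopf property gives $(p_f)_{\#}\pi_1(\hat Y_f,\hat f(x_0))=f_{\#}\pi_1(X,x_0)$ and $(p_{f'})_{\#}\pi_1(\hat Y_{f'},\hat f'(x_0'))=f'_{\#}\pi_1(X',x_0')$, while $f_{\#}\pi_1(X,x_0)=f'_{\#}\varphi_{\#}\pi_1(X,x_0)\subseteq f'_{\#}\pi_1(X',x_0')$. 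Thus the subgroup inclusion demanded by the lifting criterion \cite[Theorem~2.4.5]{sp} holds, and I obtain a map $q\co \hat Y_f\to\hat Y_{f'}$ with $p_{f'}q=p_f$ and $q\hat f(x_0)=\hat f'(x_0')$. As $\hat Y_f$, $\hat Y_{f'}$ are connected, locally path connected coverings of $Y$ and $q$ commutes with the projections, $q$ is itself a covering. To get $q\hat f=\hat f'\varphi$ I note that both sides are lifts of $f$ along $p_{f'}$ agreeing at $x_0$, so they coincide by unique lifting \cite[Theorem~2.2.2]{sp}; uniqueness of $q$ follows the same way, since any competitor is a lift of $p_f$ along $p_{f'}$ with the same value at $\hat f(x_0)$.

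For part~(2), the relation $p_{f'}q=p_f$ gives $q^{-1}(p_{f'}^{-1}(B))=p_f^{-1}(B)$, so $q$ carries $p_f^{-1}(B)$ into $p_{f'}^{-1}(B)$ and hence sends each path component $\hat B$ of $p_f^{-1}(B)$ into a single path component $\hat B'$ of $p_{f'}^{-1}(B)$ (the continuous image of a path connected set is path connected). Already this makes the assignment of labels, and therefore the map $\RR(f,\hat f)\to\RR(f',\hat f')$, well defined. The substantive point is that $q\hat B$ exhausts $\hat B'$: given $\hat y'\in\hat B'$, choose a path in $\hat B'$ from some $q(\hat y_0)$ with $\hat y_0\in\hat B$ to $\hat y'$, and lift it through the covering $q$ starting at $\hat y_0$. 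The lifted path lies in $q^{-1}(p_{f'}^{-1}(B))=p_f^{-1}(B)$, so, being a path, it stays in the single path component $\hat B$, and its endpoint maps to $\hat y'$. Hence $q\hat B=\hat B'$, and $(\hat f')^{-1}(q\hat B)$ is a genuine $\RR$--set.

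It remains to match the geometric description. Let $A=\hat f^{-1}(\hat B)$ be a nonempty $\RR$--set, that is, a preimage class of $f$ by \fullref{pre-classes}(2). For $x\in A$ the identity $q\hat f=\hat f'\varphi$ yields $\hat f'(\varphi(x))=q(\hat f(x))\in q\hat B=\hat B'$, so $\varphi(x)\in(\hat f')^{-1}(\hat B')=A'$; thus $\varphi(A)\subseteq A'$. Since the preimage classes of $f'$ partition $(f')^{-1}(B)$ and $A'\neq\emptyset$, the class $A'$ is exactly the one containing $\varphi(A)$, which is the asserted conclusion.

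The step I expect to be the real work is the surjectivity $q\hat B=\hat B'$, since this is what guarantees that the target is the full preimage class $A'$ and not a proper subset, and where care is genuinely needed: the standing conventions do not assume $B$ locally path connected, so I cannot invoke local structure of $p_{f'}^{-1}(B)$. Instead the argument must rest only on path lifting in the covering $q$ together with the observation that a lift of a path lying over $B$ again lies over $B$, and so cannot leave the path component $\hat B$.
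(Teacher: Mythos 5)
Your proof is correct and takes essentially the same route as the paper: part (1) via the same subgroup computation $(p_f)_{\#}\pi_1(\hat Y_f,\hat f(x_0))=f_{\#}\pi_1(X,x_0)\subset f'_{\#}\pi_1(X',\varphi(x_0))=(p_{f'})_{\#}\pi_1(\hat Y_{f'},\hat f'\varphi(x_0))$, the covering lifting criterion, and unique lifting to get $q\hat f=\hat f'\varphi$ and uniqueness of $q$; part (2) via the inclusion $\varphi(\hat f^{-1}(\hat B))\subset(\hat f')^{-1}(q\hat B)$, which is the entirety of the paper's proof of that item. Your additional path-lifting argument showing $q\hat B$ is a \emph{full} path component of $p_{f'}^{-1}(B)$ is a correct and welcome detail that the paper leaves implicit, and, as you note, it is exactly what makes $(\hat f')^{-1}(q\hat B)$ literally an $\RR$--set without assuming $B$ locally path connected.
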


\begin{proof}
(1)\qua Take an arbitrary point $x_0\in X$.
We have 
\begin{multline*}
(p_f)_{\# } (\pi _1(\hat Y_f , \hat f(x_0))) =
f_{\# } (\pi _1(X,x_0)) 
=
(f^\prime \varphi )_{\# } (\pi _1(X,x_0)) 
\\
\subset
f^\prime _{\# }(\pi _1(X^\prime , \varphi (x_0)))
= (p_{f^\prime })_{\# } 
(\pi _1 (\hat Y_{f^\prime }, \hat f^\prime \varphi (x_0))).
\end{multline*}
Hence there exists a unique covering 
$\smash{q \co (\hat Y_f , \hat f (x_0))\to 
(\hat Y_{f^\prime }, \hat f^\prime \varphi (x_0))}$
such that $p_f = p_{f^\prime } q$.
To prove the equality
$\smash{q \hat f = \hat f^\prime \varphi} $,
note that the two maps 
$\smash{q\hat f, \hat f^\prime \varphi \co X\to \hat Y_{f^\prime }}$
are lifts of the same map $f$ and coincide on $x_0$.

(2) follows from
$\varphi (\hat f ^{-1} \hat B)
\subset (\hat f^\prime )^{-1} (q\hat B)$.
\end{proof}

\section{Nielsen number}

\begin{defin}{\rm \cite[Definition (1.8)]{dk}}\qua\label{top-ess}
A preimage class $A_0$ of $f\co X\to Y\supset B$
is called topologically essential if
for each homotopy $\{ f_t \}\co X\to Y$ beginning at $f_0=f$
there is a Nielsen class $A_1$ of
$f_1\co X\to Y\supset B$ which is $\{f_t\}$--related to $A_0$;
that is, the class $A_0$ can not ``disappear'' under
homotopies, or there is no homotopy which can ``delete'' the
class~$A_0$. Otherwise the class $A_0$ is called inessential.
The number of topologically essential preimage classes
is called the topological Nielsen number
of the given preimage problem, or of the map
$f$ with respect to $B$, and it is denoted by
$\NT (f\co X\to Y\supset B)$ or shortly $\NT (f,B)$;
it is an integer or infinity.
\end{defin}

In the present paper we often omit the word ``topologically'',
since we do not consider the other 
type of essentiality (algebraic).
Note that some authors 
use the word ``geometrical'' instead of ``topological''.

It follows from the definition of essential class
(see also \cite[Theorem~(1.9)]{dk}) that:

\begin{predl}
The Nielsen number $\NT (f,B)$ is a homotopy 
invariant of a map~$f$ and
$\NT (f,B) \leqslant \MP (f,B)$.
\end{predl}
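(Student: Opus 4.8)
The plan is to prove the two assertions in turn, obtaining the inequality as a consequence of homotopy invariance. For the invariance, fix a homotopy $\{f_t\}\co f_0\sim f_1$ and a Hopf covering $(\hat Y,p)$; by \fullref{Hopf-c-l}(4) it is a Hopf covering for both $f_0$ and $f_1$. Lift $\{f_t\}$ to $\{\hat f_t\}$ starting at a Hopf lift $\hat f_0$, so that, by \fullref{Hopf-c-l}(5), the map $\hat f_1$ is a Hopf lift for $f_1$. As recorded in \fullref{R-sets}, the assignment $\hat f_0^{-1}(\hat B)\mapsto\hat f_1^{-1}(\hat B)$, with $\hat B$ ranging over path components of $p^{-1}(B)$, is a label-preserving bijection of $\RR$--sets which, on nonempty $\RR$--sets, coincides with the $\{f_t\}$--relation of \fullref{pre-classes}. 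Everything thus reduces to showing that this correspondence restricts to a bijection between the essential classes of $f_0$ and those of $f_1$.

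To see this, I would use the criterion of \fullref{essent}: the class $\hat f_0^{-1}(\hat B)$ is essential exactly when $\hat k_1^{-1}(\hat B)\neq\emptyset$ for every lifted homotopy $\{\hat k_t\}$ beginning at $\hat f_0$. First, essentiality of $\hat f_0^{-1}(\hat B)$ applied to $\{\hat k_t\}=\{\hat f_t\}$ itself forces $\hat f_1^{-1}(\hat B)\neq\emptyset$, so its image under the correspondence is a genuine Nielsen class. If that image were inessential, a lifted homotopy $\{\hat m_s\}$ starting at $\hat f_1$ would satisfy $\hat m_1^{-1}(\hat B)=\emptyset$; concatenating $\{\hat f_t\}$ with $\{\hat m_s\}$ produces a lifted homotopy from $\hat f_0$ that empties the label $\hat B$, contradicting essentiality of $\hat f_0^{-1}(\hat B)$. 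The reverse implication is symmetric: running $\{f_t\}$ (and its lift) backwards transports any deleting homotopy for $\hat f_1^{-1}(\hat B)$ into one for $\hat f_0^{-1}(\hat B)$. Hence essentiality is preserved in both directions and $\NT(f_0,B)=\NT(f_1,B)$. I expect this concatenation-and-reversal step---the fact that topological essentiality is itself a homotopy-invariant property---to be the main point; the rest is bookkeeping with \fullref{pre-classes} and \fullref{Hopf-c-l}.

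For the inequality, note that the Nielsen classes of any map $g$ partition $g^{-1}(B)$ into pairwise disjoint nonempty sets, so the number of classes, and a fortiori the number of essential ones, is at most $|g^{-1}(B)|$; that is, $\NT(g,B)\leqslant|g^{-1}(B)|$. Combining this with the invariance just proved, for every $g\smallsim f$ we have $\NT(f,B)=\NT(g,B)\leqslant|g^{-1}(B)|$. Taking the minimum over all $g\smallsim f$ gives $\NT(f,B)\leqslant\MP(f,B)$, the bound being vacuous when $\MP(f,B)=\infty$.
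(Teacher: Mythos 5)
Your proof is correct and follows essentially the paper's own route: the paper disposes of this proposition by appealing directly to \fullref{top-ess} (citing Dobre{\'n}ko--Kucharski) together with the remark at the end of \fullref{R-sets} that the lifted-homotopy bijection of $\RR$--sets induces a bijection between the essential classes of $f_0$ and $f_1$, which is exactly the correspondence you assemble from \fullref{Hopf-c-l}, \fullref{pre-classes} and \fullref{essent}. Your concatenation-and-reversal step and the counting argument for $\NT (f,B)\leqslant \MP (f,B)$ simply make explicit the details the paper leaves to the definition.
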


The next theorem implies stronger invariance of the
Nielsen number.

\begin{theorem}\label{property}
Suppose that the diagram
$$
\begin{CD}
X @> f >> (Y,B,Y-B)
\\
@V \varphi VV @V \psi VV
\\
X^\prime @> f^\prime >>
(Y^\prime ,B^\prime , Y^\prime - B^\prime )
\end{CD}
$$
commutes up to homotopy and $\psi $ is a
homotopy equivalence.
If
$\varphi _{\#} \co \pi_1(X ,x_0) 
\to \pi_1(X^\prime ,\varphi (x_0))$ is surjective,
then 
${\NT (f\co X\to Y \supset B)\leqslant 
\NT(f^\prime \co X^\prime \to Y^\prime \supset B^\prime )}$.
If moreover $\varphi $ has a right homotopy inverse,
then 
$\NT (f\co X\to Y \supset B) =
\NT(f^\prime \co X^\prime \to Y^\prime \supset B^\prime )$.
\end{theorem}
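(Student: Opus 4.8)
The plan is to reduce the general statement to two special cases and then compose them: the \emph{source} case, in which $\psi=\id_Y$ and $\varphi$ is arbitrary, and the \emph{target} case, in which $\varphi=\id_X$ and $\psi$ is an arbitrary homotopy equivalence of triples. Granting these, the general case follows from homotopy invariance of $\NT$. Let $\psi'$ be a homotopy inverse of $\psi$ as a map of triples. The target case gives $\NT(f,B)=\NT(\psi f,B')$; since $\psi f\sim f'\varphi$ as maps $X\to Y'$, homotopy invariance over $Y'\supset B'$ gives $\NT(\psi f,B')=\NT(f'\varphi,B')$; and the source case, applied over $Y'\supset B'$ to the strictly commuting square with top $f'\varphi$, left $\varphi$, bottom $f'$, gives $\NT(f'\varphi,B')\leqslant\NT(f',B')$, with equality when $\varphi$ has a right homotopy inverse. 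Chaining these three facts yields the theorem. Note that the target case is exactly the instance $\varphi=\id_X$ of the theorem, so it must be proved directly rather than quoted.

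For the source case I would use \fullref{q}. The key observation is that surjectivity of $\varphi_\#$ forces $f_\#(\pi_1(X))=f'_\#\varphi_\#(\pi_1(X))=f'_\#(\pi_1(X'))$, so the Hopf coverings of $f$ and $f'$ correspond to the same subgroup; hence the canonical covering $q\co\hat Y_f\to\hat Y_{f'}$ of \fullref{q} is a covering \emph{isomorphism}, and it induces a label-preserving bijection $\hat B\mapsto q\hat B$ between path components of $p_f^{-1}(B)$ and $p_{f'}^{-1}(B)$, that is, between $\RR$-sets. It then suffices to show this bijection carries essential classes to essential classes, and here I would use \fullref{essent}. Given a homotopy $\{f'_t\}$ of $f'$ with lift $\{\hat f'_t\}$ starting at $\hat f'$, the homotopy $\{f'_t\varphi\}$ of $f$ lifts to $\{q^{-1}\hat f'_t\varphi\}$ (one checks $p_f\circ q^{-1}\hat f'_t\varphi=f'_t\varphi$, and the correct initial value $\hat f$ using $q\hat f=\hat f'\varphi$); essentiality of $\hat f^{-1}(\hat B)$ then forces $(\hat f'_1)^{-1}(q\hat B)\neq\emptyset$, so by \fullref{essent} the class $(\hat f')^{-1}(q\hat B)$ is essential. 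This injects essential classes of $f$ into those of $f'$, giving the inequality.

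For the equality in the source case, given a right homotopy inverse $\sigma$ with $\varphi\sigma\sim\id_{X'}$, I would prove the reverse implication by the same bookkeeping. A homotopy $\{f_t\}$ of $f$ is transported to a homotopy of $f'$ by concatenating a homotopy realizing $f'\sim f\sigma$ (obtained from $\varphi\sigma\sim\id_{X'}$) with $\{f_t\sigma\}$; computing its lift stage by stage shows it ends at $q\hat f_1\sigma$. Hence essentiality of $(\hat f')^{-1}(q\hat B)$ yields a point $x'$ with $q\hat f_1(\sigma x')\in q\hat B$, and injectivity of $q$ gives $\hat f_1^{-1}(\hat B)\neq\emptyset$, whence $\hat f^{-1}(\hat B)$ is essential. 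Thus the label bijection restricts to a bijection of essential classes, so $\NT(f'\varphi,B')=\NT(f',B')$.

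The target case is where the real work lies, and I expect it to be the main obstacle. Here one cannot compare preimage \emph{sets} directly, since $(\psi h)^{-1}(B')=h^{-1}(\psi^{-1}(B'))$ need not equal $h^{-1}(B)$; the comparison must be made at the level of Hopf coverings. The idea is to transport the Hopf covering $(\hat Y,p)$ of $h$ along $\psi$: since $\psi_\#\co\pi_1(Y)\to\pi_1(Y')$ is an isomorphism, the subgroup $\psi_\#(h_\#\pi_1(X))=(\psi h)_\#\pi_1(X)$ determines the Hopf covering $(\hat Y',p')$ of $\psi h$ together with a lift $\hat\psi\co\hat Y\to\hat Y'$ of $\psi$, itself a homotopy equivalence, with $\hat\psi\hat h=\widehat{\psi h}$ a Hopf lift (an argument parallel to \fullref{Hopf-c-l}). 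The crux is to show that $\hat\psi$ restricts to a $\pi_0$-bijection $p^{-1}(B)\to(p')^{-1}(B')$: this should follow from $\psi$ being a homotopy equivalence of \emph{triples}, so that $\psi|_B\co B\to B'$ is a homotopy equivalence and $p^{-1}(B)$, $(p')^{-1}(B')$ are pullbacks of homotopy-equivalent coverings along homotopy-equivalent base maps. Finally, since homotopies of $h$ and of $\psi h$ correspond via $\psi$ and $\psi'$ up to homotopy, this bijection preserves topological essentiality by \fullref{essent}, giving $\NT(h,B)=\NT(\psi h,B')$. The delicate points I anticipate are the construction of $\hat\psi$ as a homotopy equivalence of coverings and the verification that its restriction is an essentiality-preserving $\pi_0$-bijection on the preimages; the lift-transport computations in the source case, though routine, must also be carried out with care.
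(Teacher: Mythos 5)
Your three-step chain is exactly the paper's proof of \fullref{property}: the paper factors the statement as $\NT(f,B)=\NT(\psi f,B^\prime )$ (its \fullref{equiv} — your target case), $\NT(\psi f,B^\prime )=\NT(f^\prime \varphi ,B^\prime )$ (homotopy invariance, from $\psi f\sim f^\prime \varphi $), and $\NT(f^\prime \varphi ,B^\prime )\leqslant \NT(f^\prime ,B^\prime )$ with equality given a right homotopy inverse (items (1.3) and (2) of its \fullref{triangle} — your source case). Your source-case argument also matches the paper's in substance: your observation that surjectivity of $\varphi _{\#}$ makes the canonical covering $q$ of \fullref{q} an isomorphism is the paper's item (1.1) (that $(\hat Y_{f^\prime },p_{f^\prime })$ and $\hat f^\prime \varphi $ serve as Hopf covering and lift for $f$) in different clothing, and your lift-transport computations via \fullref{essent} are correct; your direct two-sided correspondence of essential classes is in fact a leaner substitute for the paper's (2) of \fullref{triangle}, which proves a stronger statement (all classes of $g\chi $ essential when those of $g$ are) because that strengthening is needed later for \fullref{minim}.

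The only genuine divergence is inside the target case, the paper's \fullref{equiv}, and there the comparison is instructive. For the $\pi _0$--bijection $p^{-1}(B)\to (p^\prime )^{-1}(B^\prime )$ you invoke pullbacks of coverings along the homotopy equivalences $\psi |_B$ and $\psi $; this can be completed (via monodromy orbits, using that $\psi _{\#}$ is an isomorphism so $\hat\psi $ is fiberwise bijective, and $\psi ^{-1}(B^\prime )=B$ because $\psi $ is a map of triples), but the paper's Step 3 is more elementary: it lifts the triple homotopies $\theta \psi \sim \id _Y$ and $\psi \theta \sim \id _{Y^\prime }$ to homotopies of maps of triples and concludes that $\hat\theta \hat\psi $ and $\hat\psi \hat\theta $ are triple-homotopic to covering transformations, which are homeomorphisms of pairs, so the bijection on path components drops out with no fibration theory. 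More importantly, your claim that this bijection ``preserves topological essentiality'' \emph{in both directions} is the soft spot you yourself flag: transporting a homotopy of $\psi h$ back to one of $h$ would require concatenating through $h\sim \theta \psi h$ and tracking lifts, precisely the bookkeeping the paper avoids. Its Step 5 proves only the easy direction (a homotopy $\{ f_t\} $ deleting a class of $f$ yields $\{ \psi f_t\} $ deleting it for $\psi f$, whence $\NT (f,B)\geqslant \NT (\psi f,B^\prime )$), and Step 6 then gets the reverse inequality for free by substituting $\psi f$, $\theta \psi f$ for $f$, $\psi f$ and using $\NT (\theta \psi f,B)=\NT (f,B)$ from $\theta \psi f\sim f$. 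Adopting that substitution trick would close the one remaining gap in your plan without any of the ``delicate'' lift computations you anticipate.
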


To prove this, we need two lemmas.
The first unites and generalizes 
\cite[Lemmas~3,~$3^\prime $]{delta}
and will be used to prove \fullref{minim}.

\begin{lemma}\label{triangle}
Suppose that the diagram
$$
\begin{CD}
X @> f >> Y
\\
@V \varphi VV @| 
\\
X^\prime @> f^\prime >> Y
\end{CD}
$$
commutes.
\begin{itemize}
\item[(1)]
If 
$\varphi _{\#} \co \pi_1(X ,x_0) 
\to \pi_1(X^\prime ,\varphi (x_0))$ is surjective,
then
\begin{itemize}
\item[(1.1)] 
$(\hat Y _{f^\prime } , p_{f^\prime })$ and
$\widehat{f^\prime } \varphi $ can be taken 
as Hopf covering and lift for $f$;
\item[(1.2)] 
the map 
$\RR (f , \hat f^\prime \varphi ) 
\to \RR (f^\prime , \hat f^\prime )$
(defined in (2) of \fullref{q})
is injective;
\item[(1.3)] 
it maps essential classes of $f$ to essential classes of $f^\prime $;
in particular, 
$\NT (f, B) \leqslant \NT (f^\prime ,B)$.
\end{itemize}
\item[(2)]
Suppose moreover that $\varphi $ has a right homotopy inverse.
If for a map $g \sim f$ the problem $g\co X\to Y\supset B$ has 
only essential preimage classes, 
then the map $g^\prime = g \chi $
is homotopic to $f^\prime $ and the problem
$g^\prime \co X^\prime \to Y \supset B$ also has 
only essential preimage classes;
in particular, $\NT (f,B)=\NT(f^\prime ,B)$.
\end{itemize}
\end{lemma}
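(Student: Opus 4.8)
The plan is to prove (1.1) and (1.2) straight from the definition of a Hopf covering together with \fullref{q}, to read (1.3) off the essentiality criterion of \fullref{essent}, and then to obtain (2) as a counting argument resting on (1). For (1.1), since $f=f'\varphi$ and $\varphi_{\#}$ is surjective I would compute, for each $x\in X$, that $f_{\#}(\pi_1(X,x))=f'_{\#}\varphi_{\#}(\pi_1(X,x))=f'_{\#}(\pi_1(X',\varphi(x)))=(p_{f'})_{\#}(\pi_1(\hat Y_{f'},\hat f'\varphi(x)))$, the last equality because $\hat f'$ is a Hopf lift for $f'$; since $p_{f'}\hat f'\varphi=f'\varphi=f$ exhibits $\hat f'\varphi$ as a lift of $f$, this says $(\hat Y_{f'},p_{f'})$ and $\hat f'\varphi$ are a Hopf covering and lift for $f$. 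For (1.2) I would feed these data into \fullref{q}: the covering $q$ produced there satisfies $p_{f'}=p_{f'}q$ and $q(\hat f'\varphi)=\hat f'\varphi$, so $q$ is a deck transformation fixing a point and hence $q=\id$. Thus the induced map on $\RR$--sets is the identity on the indexing path components of $p_{f'}^{-1}(B)$, so it is injective, while \fullref{q}(2) guarantees it sends nonempty $\RR$--sets to nonempty ones.

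For (1.3) I would use \fullref{essent}. Writing a class of $f$ as $(\hat f'\varphi)^{-1}(\hat B)$ and its image as $A'=(\hat f')^{-1}(\hat B)$, I take an arbitrary homotopy $\{\hat f'_t\}$ starting at $\hat f'$; precomposition with $\varphi$ yields a homotopy $\{\hat f'_t\varphi\}$ of the Hopf lift $\hat f'\varphi$ of $f$. Essentiality of the class of $f$ then forces $\varphi^{-1}((\hat f'_1)^{-1}(\hat B))=(\hat f'_1\varphi)^{-1}(\hat B)\neq\emptyset$, whence $(\hat f'_1)^{-1}(\hat B)\neq\emptyset$, so $A'$ is essential by \fullref{essent}. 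Combined with the injectivity of (1.2) this yields $\NT(f,B)\leq\NT(f',B)$.

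For (2) I would first record $g'=g\chi\sim f\chi=f'(\varphi\chi)\sim f'$, using $g\sim f$, $f=f'\varphi$ and $\varphi\chi\sim\id_{X'}$. The core is a sandwich on the number of classes. Because $\varphi_{\#}$ is onto and $g\sim g'\varphi$, the subgroups $g_{\#}\pi_1(X)=(g'\varphi)_{\#}\pi_1(X)=g'_{\#}\pi_1(X')$ of $\pi_1(Y)$ coincide, so by \fullref{Hopf-c-l} and uniqueness of Hopf coverings the maps $g$ and $g'$ share a single Hopf covering $(\hat Y,p)$. Applying \fullref{q} with $(g',\chi,g)$ in the roles of $(f,\varphi,f')$ (so that $g'=g\chi$), over this common covering, turns the covering $q$ into a deck transformation, hence a bijection of the path components of $p^{-1}(B)$; together with \fullref{q}(2) this makes the induced map $\Phi$ from Nielsen classes of $g'$ to Nielsen classes of $g$ injective, so $g'$ has at most as many classes as $g$. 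Since $g$ has only essential classes, that number equals $\NT(g,B)=\NT(f,B)$. On the other hand (1) gives $\NT(f,B)\leq\NT(f',B)=\NT(g',B)$, the number of essential classes of $g'$. Sandwiching, the number of essential classes of $g'$, the total number of classes of $g'$, and $\NT(f,B)$ all coincide; hence every class of $g'$ is essential and $\NT(f,B)=\NT(f',B)$.

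The \textbf{main obstacle} is part (2). The delicate point is that ``having only essential classes'' is not a homotopy invariant---a homotopy can create deletable classes---so the hypothesis on $g$ cannot simply be transported along $g\sim g'\varphi$; the argument must instead be run at the level of path components of $p^{-1}(B)$. The structural fact that rescues this is that surjectivity of $\varphi_{\#}$ forces $g$ and $g'$ to have the \emph{same} Hopf covering, which is exactly what downgrades the a priori many-to-one component map $q$ of \fullref{q} to a deck transformation and so makes $\Phi$ injective. I would double-check the basepoint and conjugacy bookkeeping both in the subgroup equality and in the chain $g'\sim f'$, since the asymmetry between the right homotopy inverse $\varphi\chi\sim\id_{X'}$ and the uncontrolled composite $\chi\varphi$ is where a direction error would most easily slip in.
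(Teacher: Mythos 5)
Your parts (1.1)--(1.3) are correct and essentially identical to the paper's argument, down to proving (1.3) through the lift criterion of Corollary~\ref{essent} (the paper states the same thing contrapositively: if $\{f'_t\}$ deletes $A'$ then $\{f'_t\varphi\}$ deletes $A$). In part (2), the homotopy $g\sim g'\varphi$ that you assert without proof is in fact true, via $g'\varphi=g\chi\varphi\sim f'\varphi\chi\varphi\sim f'\varphi=f\sim g$, so $g$ and $g'$ do share a Hopf covering. The first genuine gap comes right after: you claim that, over this common covering, the map $q$ of Lemma~\ref{q} applied to the triangle $(g',\chi,g)$ ``is a deck transformation''. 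That does not follow from $pq=p$ alone. A covering self-morphism of a connected covering with group $H$ corresponds to a class $[\gamma]$ with $\gamma^{-1}H\gamma\subseteq H$, and this containment can be proper (eg $H=\langle a\rangle$ in $\langle a,t\mid tat^{-1}=a^2\rangle$ has $tHt^{-1}=\langle a^2\rangle\subsetneq H$); then $q$ is surjective but not injective and can merge distinct path components of $p^{-1}(B)$, which is exactly what the injectivity of your $\Phi$ must exclude. Invertibility of $q$ is equivalent to the subgroup equality $g'_{\#}\pi_1(X',x'_0)=g_{\#}\pi_1(X,\chi(x'_0))$, ie to $\hat g\chi$ being a Hopf lift of $g'=g\chi$. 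This is true, but not formally: one must first check that $\hat f\chi=\hat f'\varphi\chi$ is a Hopf lift of $f\chi$ (this uses surjectivity of $\varphi_{\#}$ together with the fact that $(\varphi\chi)_{\#}$ is a change-of-basepoint isomorphism because $\varphi\chi\sim\id_{X'}$), and then transport along the lifted homotopy $\{\hat f_t\chi\}$ using Proposition~\ref{Hopf-c-l}(5). This is precisely the lift bookkeeping the paper performs with $\hat f=\hat f'\varphi$, $\hat f\chi$, $\tilde f'$ and the sequence \eqref{*}, whose composite is the identity on labels; you cannot get it for free from the common covering.

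The second gap is your concluding sandwich. From $\NT(f,B)\leqslant\NT(f',B)=\NT(g',B)\leqslant T'\leqslant T=\NT(f,B)$ you infer that every class of $g'$ is essential --- but equality of these quantities forces $T'=\NT(g',B)$ to mean ``no inessential class'' only when the numbers are finite. If $\NT(f,B)$ is infinite, the cardinals can coincide while $g'$ still has a nonempty inessential class, and the paper explicitly refuses the hypothesis $\NT(f,B)<\infty$ (Theorem~\ref{minim} is formulated without it, and part (2) is applied there in that generality). The paper's proof avoids counting altogether: it assumes a single nonempty inessential class of $g\chi$, chases its label $\hat B$ both ways around the commutative square of $\RR$--sets and the sequence \eqref{*} --- all of whose maps are the identity on labels --- and derives the contradiction that the same labelled class of $f'$ is both essential and inessential. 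To repair your argument you would need to replace the cardinality sandwich by this per-class label chase (or restrict to the finite case, which is strictly weaker than what the lemma asserts and what Theorem~\ref{minim} needs).
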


\begin{proof}
(1.1)\qua
This follows from 
\begin{align*}
(p_{f^\prime })_{\#} 
(\pi_1(\hat Y_{f^\prime },\widehat{f^\prime } \varphi(x_0))) &=
f^\prime _{\#} (\pi_1(X^\prime ,\varphi (x_0)))
\\
(f^\prime \varphi )_{\#} (\pi_1(X,x_0)) &=
f_{\#} (\pi_1(X,x_0)).
\end{align*}
(1.2)\qua This holds because the map under consideration is given by
$$
(\hat f)^{-1} (\hat B ) = \varphi ^{-1} ((\hat f^\prime )^{-1}
(\hat B)) \mapsto (\hat f^\prime )^{-1}(\hat B).
$$
(1.3)\qua
If a preimage class $A$ of
$f\co X \to Y \supset B$ is taken to the
class $A^\prime \supset \varphi (A)$ 
of $f^\prime \co X^\prime \to Y\supset B$ 
which can be ``deleted'' by a homotopy 
$\{ f_t^\prime \}$, then the class $A$ can be ``deleted''
by the homotopy $\{ f_t^\prime \varphi \}$.

(2)\qua
Denote by $\chi $ the right homotopy inverse for $\varphi $,
ie, $\varphi \chi \sim \id _{X^\prime }$.
It is evident that
$$
g^\prime = g\chi \sim f\chi = f^\prime \varphi \chi \sim f^\prime .
$$
By (1.1) and \fullref{Hopf-c-l}, we can take 
the same Hopf covering $(\hat Y, p )$ for maps
$f$, $f^\prime $, $f\chi $ simultaneously.
Let $\hat f^\prime $ be a Hopf lift for $f^\prime $,
then 
$\hat f = \hat f^\prime \varphi $ 
and
$\hat f \chi $
are Hopf lifts for
$f$ and $f\chi $.
Lift the homotopy $f\chi \sim f^\prime $ starting at $\hat f \chi $;
let $\tilde f^\prime $ be its final map. 
By \fullref{Hopf-c-l} it is a Hopf lift for $f^\prime $.
Note that
$$
\tilde f^\prime \sim \hat f \chi = 
\hat f^\prime \varphi \chi \sim
\hat f^\prime .
$$
This homotopy and the above equalities define
(see \fullref{R-sets}) maps in the following sequence:
\begin{equation}\label{*}
\RR (f^\prime , \hat f^\prime )
\leftrightarrow
\RR (f^\prime ,\tilde f^\prime ) \leftrightarrow
\RR (f \chi , \hat f \chi )
\to 
\RR (f, \hat f)
=
\RR (f^\prime \varphi , \hat f^\prime \varphi )
\to
\RR (f^\prime , \hat f^\prime ) 
\tag{$*$}
\end{equation}
Going through this sequence, we obtain
$$
(\hat f^\prime )^{-1} (\hat B)
\mapsto
(\tilde f^\prime )^{-1} (\hat B)
\mapsto 
(\hat f \chi )^{-1}(\hat B) 
\mapsto 
\hat f ^{-1}(\hat B)
=
(\hat f^\prime \varphi )^{-1}(\hat B)
\mapsto 
(\hat f^\prime )^{-1}(\hat B) ,
$$
where $\hat B$ is an arbitrary path component 
of $p^{-1}(B)$;
this map is the identity map
$\RR (f^\prime ,\hat f^\prime )\to \RR (f^\prime ,\hat f^\prime )$.
In particular, it gives a bijection of the set of essential Nielsen
classes of $f^\prime $ onto itself.

Now suppose $g\sim f$ has only essential preimage classes.
The following diagram commutes:
$$
\begin{CD}
\RR (f\chi , \hat f \chi )
@>>>
\RR (f, \hat f)
\\
@VVV @VVV
\\
\RR (g\chi , \hat g \chi )
@>>>
\RR (g, \hat g)
\end{CD}
\quad
\quad
\quad
\begin{CD}
(\hat f \chi )^{-1}(\hat B) 
@>>> 
\hat f ^{-1}(\hat B)
\\
@VVV @VVV
\\
(\hat g \chi )^{-1}(\hat B) 
@>>>
\hat g ^{-1}(\hat B)
\end{CD}
$$
where the vertical maps are bijections defined 
by homotopies 
$\{ f_t\} \co f\sim g$ 
and
$\{ f_t \chi \} \co f\chi \sim g\chi $
(see \fullref{R-sets}),
and the horizontal ones are those of (1.2) of the present Lemma.

Suppose there exists a nonempty nonessential class of $g\chi $.
Going in the diagram up to $\RR (f\chi , \hat f\chi)$
and then to the left side of \eqref{*},
we obtain 
a nonessential class of $f^\prime $.
In contrast to it, going in the diagram right and up 
and then to the right part of \eqref{*},
we obtain 
a (nonempty) essential class of $f^\prime $.
But, as noted above, going through \eqref{*} gives an identity
map of $\RR (f^\prime , \hat f^\prime )$.
The contradiction proves the statement.
\end{proof}

The 
condition of Item (2) of \fullref{triangle}
means that the space
$X^\prime $ is dominated by the space~$X$.
Recall the definition from \cite[Chapter~I, Exercise~R, p~32]{hu}:
\begin{defin}
A space $X^\prime $ is dominated by a space~$X$
(or $X^\prime $ is a homotopy retract of $X$)
if there exist maps 
$\chi \co X^\prime \to X$ and
$\varphi \co X\to X^\prime $ 
such that 
$\varphi \chi \sim \id_{X^\prime }$.
\end{defin}

\begin{lemma}\label{equiv}
If
$
(Y,B,Y-B)
\stackrel{\psi }{\rightarrow }
(Y^\prime , B^\prime , Y^\prime  -B^\prime )$
is a homotopy equivalence,
then 
$\NT(f\co X \to Y\supset B) = 
\NT(\psi f\co X \to Y^\prime\supset B^\prime )$
for each map $f\co X\to Y$.
\end{lemma}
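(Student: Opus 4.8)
The plan is to push the whole problem up into the covering-space picture of \fullref{pre-classes} and then argue that the triple homotopy equivalence $\psi$ lifts to a homotopy equivalence of the relevant pairs of Hopf covers which carries essential classes to essential classes. First I would fix a Hopf covering $(\hat Y,p)$ and Hopf lift $\hat f$ for $f\co X\to Y\supset B$, write $H=f_{\#}(\pi_1(X,x_0))$ and $\hat B=p^{-1}(B)$, and take a Hopf covering $(\hat Y',p')$ and Hopf lift $\widehat{\psi f}$ for $\psi f$. Since $\psi$ is a homotopy equivalence, $\psi_{\#}$ is an isomorphism, so $(\hat Y',p')$ corresponds to $\psi_{\#}(H)=(\psi f)_{\#}(\pi_1(X,x_0))$. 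The lifting criterion then applies to $\psi p\co \hat Y\to Y'$, producing a lift $\hat\psi\co \hat Y\to \hat Y'$ with $p'\hat\psi=\psi p$ and $\hat\psi\hat f(x_0)=\widehat{\psi f}(x_0)$; by uniqueness of lifts $\hat\psi\hat f=\widehat{\psi f}$, so $\hat\psi\hat f$ is a Hopf lift for $\psi f$. As $\psi$ maps the triple $(Y,B,Y-B)$ into $(Y',B',Y'-B')$, the lift $\hat\psi$ sends $\hat B$ into $\hat B'=p'^{-1}(B')$ and $\hat Y\setminus\hat B$ into $\hat Y'\setminus\hat B'$.

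Next I would show that $\hat\psi$ induces a bijection of the path components of $\hat B$ and $\hat B'$, that is, of the labels of the $\RR$--sets of \fullref{R-sets}. Choosing a triple homotopy inverse $\psi'$ and lifting it to $\hat\psi'\co \hat Y'\to \hat Y$ (again by the lifting criterion, since $\psi'_{\#}=\psi_{\#}^{-1}$ carries $\psi_{\#}(H)$ back to $H$ up to the usual base-point conjugation), I would lift the triple homotopies $\psi'\psi\sim \id_Y$ and $\psi\psi'\sim \id_{Y'}$. The crucial observation is that these move $B$ inside $B$ and $B'$ inside $B'$, so their lifts move $\hat B$ inside $\hat B$ and $\hat B'$ inside $\hat B'$ and end at covering transformations $\rho,\rho'$, which also preserve $\hat B,\hat B'$. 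Hence, as a map of \emph{pairs} $(\hat Y,\hat B)\to(\hat Y',\hat B')$, the map $\hat\psi$ has the two-sided homotopy inverse $G=\rho^{-1}\hat\psi'$ through pair homotopies. Restricting these homotopies to $\hat B,\hat B'$ yields the desired bijection $C\leftrightarrow C'$ of path components; since complements go to complements, one gets $\hat\psi^{-1}(C')=C$, whence $\hat f^{-1}(C)=(\hat\psi\hat f)^{-1}(C')$ as subsets of $X$. Thus the Nielsen classes (nonempty $\RR$--sets) of $f$ and of $\psi f$ coincide set-theoretically under $C\leftrightarrow C'$.

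It then remains to see that this correspondence preserves essentiality. By \fullref{essent} the class labelled $C$ is essential iff $\hat f$ admits no homotopy in $\hat Y$ whose final map misses $C$ (every homotopy of $\hat f$ is the lift of a homotopy of $f$, so this is exactly the criterion of \fullref{essent}). I would transfer this property across $\hat\psi$: if $\hat f\sim \hat f_1$ with $\hat f_1(X)\cap C=\emptyset$, then composing with $\hat\psi$ and using $\hat\psi(\hat Y\setminus C)\subset \hat Y'\setminus C'$ pushes $\hat\psi\hat f$ off $C'$; conversely, applying $G$ and using $G\hat\psi\sim \id$ together with $G(\hat Y'\setminus C')\subset \hat Y\setminus C$ pushes $\hat f$ off $C$. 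Hence $C$ is essential for $f$ iff $C'$ is essential for $\psi f$, and because $C\leftrightarrow C'$ is a bijection this gives $\NT(f,B)=\NT(\psi f,B')$.

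The main obstacle is the middle step: one must make precise that $\hat\psi$ is a homotopy equivalence of the pairs $(\hat Y,\hat B)\to(\hat Y',\hat B')$ \emph{compatibly with the partition into path components}, so that the bijection $C\leftrightarrow C'$ used for the class correspondence is literally the same one that governs the essentiality transfer. The delicate feature is the appearance of the covering transformations $\rho,\rho'$ when lifting $\psi'\psi\sim\id$ and $\psi\psi'\sim\id$; I expect to resolve this exactly as above, by exploiting that these are \emph{triple} homotopies (so their lifts, and the resulting $\rho,\rho'$, preserve $\hat B$ and merely permute its components) and by absorbing $\rho$ into the inverse $G=\rho^{-1}\hat\psi'$. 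Everything else is a routine application of the lifting criterion and of \fullref{essent}.
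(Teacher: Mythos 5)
Your first three moves --- constructing $\hat\psi$ with $\hat\psi\hat f=\widehat{\psi f}$ via the lifting criterion, checking that $\hat\psi$ and the lift $\hat\psi'$ of the homotopy inverse are maps of triples, and obtaining the bijection of path components of $p^{-1}(B)$ and $p'^{-1}(B')$ by lifting the triple homotopies $\psi'\psi\sim\id_Y$ and $\psi\psi'\sim\id_{Y'}$ so that they terminate in covering transformations --- reproduce Steps 1--4 of the paper's proof of \fullref{equiv} almost verbatim (the paper writes $\theta,\hat\theta$ for your $\psi',\hat\psi'$; for the existence of $\hat\theta$ it avoids your ``base-point conjugation'' caveat by noting $f\sim\theta\psi f$, so that $(\hat Y,p)$ is a Hopf covering for $\theta\psi f$ by \fullref{Hopf-c-l}, and re-running the Step 1 argument for the pair $\psi f$, $\theta\psi f$). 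Where you genuinely diverge is the essentiality transfer. The paper never upgrades $\hat\psi$ to a homotopy equivalence of pairs: its Step 5 transfers inessentiality downstairs in one line --- if a homotopy $\{f_t\}$ deletes a class $A$ of $f\co X\to Y\supset B$, then $\{\psi f_t\}$ deletes $A$ for $\psi f\co X\to Y'\supset B'$ --- giving $\NT(f,B)\geqslant\NT(\psi f,B')$, and Step 6 gets the reverse inequality for free by applying this inequality to the pair $\psi f$, $\theta\psi f$ and using $\theta\psi f\sim f$ together with ordinary homotopy invariance of $\NT$. Your route proves the two-way correspondence of essential classes upstairs via \fullref{essent}; that yields a sharper conclusion (a label-compatible bijection of $\RR$--sets respecting essentiality), but it is exactly what forces you into the delicate point you flag.

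On that point there is a real, though fillable, gap: $\hat\psi G\sim\id_{\hat Y'}$ for $G=\rho^{-1}\hat\psi'$ does not follow directly from the lifted homotopies. Lifting gives $\hat\psi'\hat\psi\sim\rho$ and $\hat\psi\hat\psi'\sim\rho'$, hence a \emph{left} inverse $L=\rho^{-1}\hat\psi'$ and a \emph{right} inverse $R=\hat\psi'(\rho')^{-1}$; but $\hat\psi\rho^{-1}\hat\psi'$ is not visibly homotopic to the identity, and you cannot simply commute $\rho^{-1}$ past $\hat\psi$, since Hopf coverings need not be regular, so in general no covering transformation $\rho''$ of $p'$ satisfies $\hat\psi\rho^{-1}=\rho''\hat\psi$. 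The gap is closed by the standard left--right inverse argument, $L\sim L(\hat\psi R)=(L\hat\psi)R\sim R$, where all homotopies involved are homotopies of triples, so $G=L$ is indeed two-sided in the triple category; alternatively, note that the two one-sided relations already suffice for everything you use them for --- $\pi_0(L)\pi_0(\hat\psi)=\id$ and $\pi_0(\hat\psi)\pi_0(R)=\id$ give the component bijection, and the left-inverse relation $G\hat\psi\sim\id$ alone suffices to pull a homotopy pushing $\hat\psi\hat f$ off $C'$ back to one pushing $\hat f$ off $C$. With that repaired your argument is correct; the paper's Steps 5--6 show the same conclusion can be reached more cheaply by working downstairs and exploiting symmetry.
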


\begin{proof}
Let 
$\theta \co (Y^\prime , B^\prime , Y^\prime  -B^\prime ) \to (Y,B,Y-B)$ 
be a homotopy inverse for~$\psi $
(that is, $\theta \psi $ and $\psi \theta $ are homotopic to
$\id _Y$ and $\id _{Y^\prime }$ respectively, by
homotopies of maps of corresponding triples).
Let $p_f\co \hat Y\to Y$, 
$p_{\psi f}\co \hat Y^\prime \to Y^\prime $ be
Hopf coverings for $f\co X \to Y$, 
$\psi f \co X \to Y^\prime $, and let
$\smash{\hat f}$, $\smash{\widehat {\psi f}}$ be Hopf lifts for
$f$, $\psi f$.
The proof is in 6 steps.

\textbf{Step 1}\qua
There exist lifts
$\hat \psi \co \hat Y\to \hat Y^\prime $, 
$\hat \theta \co \hat Y^\prime \to \hat Y$ of maps $\psi $,
$\theta $, and
we may assume that
$\smash{\widehat {\psi f}} =\smash{\hat\psi \hat f}$.

From
$$
(p_{\psi f})_{\# } (\pi_1(\hat Y^\prime , \smash{\widehat {\psi f}} (x_0)))
=
(\psi f)_{\# } (\pi _1(X,x_0)) =
(\psi p_f )_{\# } (\pi _1(\hat Y , \hat f (x_0)))
$$
we conclude that there exists 
a lift $\hat\psi $ of $\psi $ such that
$\smash{\widehat {\psi f}} (x_0) = \hat \psi \hat f (x_0)$
and hence
$\smash{\widehat {\psi f}}  = \smash{\hat \psi \hat f}$.

Now we prove existence of $\hat \theta $. 
Since $f\sim \theta \psi f$,
from \fullref{Hopf-c-l} 
it follows that
$(\hat Y_f,p_f)$ is a Hopf covering for $\theta \psi f$.
Therefore it suffices to apply
to $\psi f $, $\theta \psi f$ what was just proved 
for $f$, $\psi f$.

\textbf{Step 2}\qua
$\hat \psi $, $\hat \theta $ are maps of triples:
$$
(\hat Y,p_f ^{-1}(B),\hat Y-p_f ^{-1}(B))
\rightleftarrows
(\hat Y^\prime , p_{\psi f}^{-1}(B^\prime ), 
\hat Y^\prime  - p_{\psi f} ^{-1}(B^\prime )). 
$$
This is easy; for $\hat \psi $, we have
$$
\hat\psi ^{-1} (p_{\psi f} ^{-1}(B^\prime ) ) =
p_{f}^{-1} (\psi ^{-1}(B^\prime )) = 
p_f^{-1} (B) ,
$$
and similarly for $\hat\theta $.

\textbf{Step 3}\qua
$\hat \psi $, $\hat \theta $ map sets of path
components of $p_f^{-1}(B)$, $p_{\psi f}^{-1}(B^\prime )$
bijectively.

Let $\{ h_t \}\co (Y,B, Y-B) \to (Y,B,Y-B)$
be a homotopy joining $h_0 = \theta \psi $ to $h_1 = \id _Y$.
Lift the homotopy 
$$ 
\{ H_t = h_t p_f \} \co
(\hat Y,p_f^{-1}(B), \hat Y - p_f^{-1}(B))
\to
(Y,B,Y-B),
$$
which joins $H_0=\theta\psi p_f $ to $H_1=p_f$,
to $\hat Y$, starting at $\hat H_0=\hat\theta\hat\psi $.
The lift is a map of triples
$$
\{ \hat H_t \} \co
(\hat Y,p_f^{-1}(B), \hat Y - p_f^{-1}(B))
\to 
(\hat Y, p_f^{-1}(B), \hat Y - p_f^{-1}(B)),
$$
and $\hat H_1$ is a covering transformation of~$p_f$.
Similarly, $\hat\psi\hat\theta $ is homotopic by
homotopy of maps of triples
$$
(\hat Y^\prime ,p_{\psi f}^{-1}(B^\prime ), 
\hat Y^\prime  - p_{\psi f}^{-1}(B^\prime ))
\to 
(\hat Y^\prime , p_{\psi f}^{-1}(B^\prime ), 
\hat Y^\prime  - p_{\psi f}^{-1}(B^\prime ))
$$
to a covering transformation of~$p_{\psi f}$.
This implies the required statement.

\textbf{Step 4}\qua
Preimage classes of the problem $f\co X\to Y \supset B$
coincide with those of
$\psi f \co X\to Y^\prime \supset B^\prime $.

Firstly, since $\psi ^{-1}(B^\prime ) = B$, we have
$
f^{-1}(B) =
(\psi f)^{-1} (B^\prime )
$.
Secondly, from Step 2 it
follows that two points
belong to the same Nielsen class of the problem
$f\co X\to Y\supset B$ iff
they belong to one Nielsen class 
of $\psi f \co X\to Y^\prime \supset B^\prime $.

\textbf{Step 5}\qua
If a preimage class $A$ 
of $f\co X\to Y\supset B$ is inessential,
then it is inessential as a preimage class of 
$\psi f\co X\to Y^\prime \supset B^\prime $.

In fact, if
a homotopy $\{ f_t \}$, with 
$f_0=f$, ``deletes'' $A$ as a preimage class
of $f\co X \to Y\supset B$, then the homotopy 
$\{ \psi f_t \}$
starts at $\psi f$ and 
``deletes'' $A$ as a preimage class of
$\psi f\co X \to Y^\prime \supset B^\prime $.

\textbf{Step 6}\qua 
Previous step shows that
$\NT (f,B)\geqslant \NT (\psi f,B^\prime )$.
Taking in this inequality 
$\psi f$, $\theta\psi f$ in place of $f$, $\psi f$,
we obtain
$\NT (\psi f,B^\prime )\geqslant \NT (\theta\psi f,B)$.
But $\theta \psi f\sim f$ implies
$\NT (\theta \psi f, B)=\NT(f,B)$, and the Lemma is
proved.
\end{proof}

Now we prove \fullref{property}.

\begin{proof}
\fullref{equiv} implies that $\NT (f,B) = \NT (\psi f,B^\prime )$.
Since $\psi f \sim f^\prime \varphi $, the last number equals
$\NT (f^\prime \varphi ,B^\prime )$. 
By (1.3) of \fullref{triangle}
(applied to the triangle of maps
$f^\prime \varphi $, $f^\prime $ and $\varphi $)
this is less than or equal to $\NT(f^\prime ,B^\prime )$,
hence
$\NT (f,B) \leqslant \NT (f^\prime , B^\prime )$
and the first statement is proved.

If $\varphi $ has a right homotopy inverse,
then by (2) of \fullref{triangle} we have
$\NT (f^\prime \varphi ,B^\prime ) = \NT (f^\prime ,B^\prime )$;
hence $\NT (f,B) = \NT (f^\prime , B^\prime )$.
\end{proof}

\begin{cor}
Suppose that the diagram
$$
\begin{CD}
X @>f >> (Y,Y-B,B) 
\\
@V \varphi VV @V \psi VV
\\
X^\prime @> f^\prime >> (Y^\prime , Y^\prime - B^\prime , B^\prime )
\end{CD}
$$
commutes up to homotopy
and $\varphi $, $\psi $ are homotopy equivalences.
Then we have
$$\NT (f\co X\to Y\supset B) 
= \NT (f^\prime \co X^\prime\to Y^\prime\supset B^\prime ).$$
\end{cor}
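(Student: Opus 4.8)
The plan is to derive this corollary directly from \fullref{property}, since the hypotheses of the corollary are simply a strengthening of those already handled there. First I would observe that the triple $(Y,Y-B,B)$ written in the corollary's diagram differs from the triple $(Y,B,Y-B)$ appearing in \fullref{property} only in the order in which the two distinguished subspaces are displayed; since a map of triples is required to respect both subspaces independently of the order in which they are listed, the two diagrams encode exactly the same data, and \fullref{property} is applicable verbatim.

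Next I would verify the two conditions that \fullref{property} imposes on the vertical map $\varphi $. Because $\varphi \co X\to X^\prime $ is assumed to be a homotopy equivalence, the induced homomorphism $\varphi _{\#}\co \pi_1(X,x_0)\to \pi_1(X^\prime ,\varphi (x_0))$ is an isomorphism and hence surjective, which is precisely the hypothesis needed for the inequality $\NT (f,B)\leqslant \NT (f^\prime ,B^\prime )$. Furthermore, a homotopy equivalence possesses a two-sided homotopy inverse $\chi \co X^\prime \to X$, so in particular $\varphi \chi \sim \id _{X^\prime }$ and $\chi $ serves as a right homotopy inverse for $\varphi $; this is exactly what \fullref{property} requires in order to upgrade the inequality to the equality. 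The remaining hypotheses, namely that the square commutes up to homotopy and that $\psi $ is a homotopy equivalence, are assumed outright in the statement of the corollary.

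Having assembled all of this, I would simply invoke the equality clause of \fullref{property} to conclude that $\NT (f,B)=\NT (f^\prime ,B^\prime )$. I do not expect any genuine obstacle here: the corollary is really a repackaging of \fullref{property} under the cleaner and more symmetric assumption that both vertical maps are homotopy equivalences, and the only points that deserve a sentence of care are the harmless reordering of the subspaces in the triple and the extraction of a right homotopy inverse from the full two-sided homotopy inverse of $\varphi $.
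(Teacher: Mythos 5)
Your proof is correct and is exactly the paper's intended argument: the corollary is stated without proof as an immediate consequence of \fullref{property}, obtained by observing that the homotopy equivalence $\varphi $ induces an isomorphism (in particular a surjection) $\varphi_{\#}$ on $\pi_1$ and admits a right homotopy inverse. Your side remark about the triples is also sound, since the reordering from $(Y,B,Y-B)$ to $(Y,Y-B,B)$ occurs consistently in both source and target, so the maps of triples carry identical data.
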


For roots the following corollary
is stated (without proof) in a slightly stronger form in
\cite[Theorem~(3.10)]{roots-review}.

\begin{cor}
Suppose that the diagram
$$
\begin{CD}
X @>f >> (Y,Y-B,B) 
\\
@V \varphi VV @V \psi VV
\\
X^\prime @> f^\prime >> (Y^\prime , Y^\prime - B^\prime , B^\prime )
\end{CD}
$$
commutes and $\varphi $, $\psi $ are homeomorphisms.
Then
$$\NT (f\co X\to Y\supset B) 
= \NT (f^\prime \co X^\prime\to Y^\prime\supset B^\prime ).$$
\end{cor}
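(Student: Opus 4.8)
The plan is to recognize this corollary as an immediate specialization of the preceding one, whose hypotheses are strictly weaker. That preceding corollary already gives equality of Nielsen numbers whenever the square commutes up to homotopy and $\varphi$, $\psi$ are homotopy equivalences of spaces and of triples respectively; so all I need to verify is that the present, more restrictive assumptions imply those.

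First I would observe that a homeomorphism is in particular a homotopy equivalence. For $\varphi\co X\to X'$ the topological inverse $\varphi^{-1}$ satisfies $\varphi^{-1}\varphi=\id_X$ and $\varphi\varphi^{-1}=\id_{X'}$ on the nose, hence a fortiori up to homotopy, so $\varphi^{-1}$ serves as a two-sided homotopy inverse. The same holds for the homeomorphism of triples $\psi\co(Y,Y-B,B)\to(Y',Y'-B',B')$: its set-theoretic inverse carries $B'$ onto $B$ and $Y'-B'$ onto $Y-B$, so it is again a homeomorphism of triples, and thus $\psi$ is a homotopy equivalence of triples.

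Next I would note that strict commutativity $\psi f=f'\varphi$ is the special case of commutativity up to homotopy realized by the constant homotopy. Thus every hypothesis of the preceding corollary is met, and invoking it yields $\NT(f\co X\to Y\supset B)=\NT(f'\co X'\to Y'\supset B')$, as required.

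I expect no genuine obstacle here, since the substantive content is already carried by \fullref{property} and by the homotopy invariance of the Nielsen number packaged in \fullref{equiv}; the present statement merely reassembles them under the cleanest possible hypotheses. If one preferred to avoid routing through the preceding corollary, one could instead feed the data directly into \fullref{property}: a homeomorphism $\varphi$ induces a surjective (indeed bijective) $\varphi_{\#}$ and possesses a genuine inverse, which in particular is a right homotopy inverse, so the equality clause of \fullref{property} applies verbatim.
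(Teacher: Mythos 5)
Your proposal is correct and coincides with the route the paper intends: the corollary is stated without proof precisely because, as you observe, a homeomorphism of triples is a homotopy equivalence of triples and strict commutativity is a special case of homotopy commutativity, so the preceding corollary (equivalently, the equality clause of \fullref{property}, since $\varphi_{\#}$ is bijective and $\varphi^{-1}$ is a right homotopy inverse) applies directly. Nothing is missing.
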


\section[Other lemmas needed for proof of \ref{minim}]{Other lemmas needed for proof of \fullref{minim}}

Our proof of \fullref{minim} imitates 
those of Brooks~\cite[Theorems~1,2]{delta}. 
We also need several lemmas.

From \cite[Chapter~3, \S 21, 2A, Exercises~3,4]{ff} 
and \cite[Chapter~1, \S 6, 3C, Corollary]{ff} 
it follows that:

\begin{lemma}\label{z}
Each local coefficient system defined on the 
$2$--skeleton~$X^{(2)}$ of a polyhedron~$X$
extends (uniquely up to isomorphism) to~$X$.
\end{lemma}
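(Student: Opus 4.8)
The plan is to prove \fullref{z} by analyzing what a local coefficient system on a polyhedron actually is and showing that such data is entirely determined by the fundamental group. Recall that a local coefficient system on a path-connected space $X$ is equivalent to a functor from the fundamental groupoid $\Pi_1(X)$ to the category of abelian groups; after fixing a base point $x_0$, this amounts to an abelian group $G$ together with an action of $\pi_1(X,x_0)$ on $G$, i.e.\ a group homomorphism $\rho\co \pi_1(X,x_0)\to \operatorname{Aut}(G)$. Two such systems are isomorphic precisely when the corresponding representations are conjugate-equivalent. So the entire content of the statement is an assertion about the inclusion $X^{(2)}\hookrightarrow X$ at the level of fundamental groups.

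The key structural fact, and the reason the hypothesis singles out the $2$--skeleton, is the cited result \cite[Chapter~1, \S 6, 3C, Corollary]{ff}: the inclusion $X^{(2)}\hookrightarrow X$ induces an \emph{isomorphism} on fundamental groups. Indeed, attaching cells of dimension $3$ and higher does not change $\pi_1$, since a cell of dimension $\geqslant 3$ is attached along a map of $S^{k-1}$ with $k-1\geqslant 2$, which is simply connected, so it kills nothing and adds no generators. This is exactly the cellular approximation input packaged in \cite[Chapter~3, \S 21, 2A, Exercises~3,4]{ff}. Granting this isomorphism $i_\#\co \pi_1(X^{(2)},x_0)\xrightarrow{\;\cong\;}\pi_1(X,x_0)$, the proof of the lemma is essentially a diagram-chase dictionary translation.

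First I would establish \emph{existence} of the extension. Given a local system $\mathcal{L}$ on $X^{(2)}$, pass to its representation $\rho\co \pi_1(X^{(2)},x_0)\to \operatorname{Aut}(G)$. Compose with the inverse isomorphism $(i_\#)^{-1}$ to obtain a representation $\tilde\rho\co \pi_1(X,x_0)\to\operatorname{Aut}(G)$ on the same group $G$, and let $\tilde{\mathcal{L}}$ be the local system on $X$ determined by $\tilde\rho$. Because $i_\#$ is the map induced by inclusion, the restriction $\tilde{\mathcal{L}}|_{X^{(2)}}$ corresponds to $\tilde\rho\circ i_\#=\rho$, hence is isomorphic to $\mathcal{L}$ as a system over $X^{(2)}$; this gives the extension. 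Then I would establish \emph{uniqueness}: if $\tilde{\mathcal{L}}_1,\tilde{\mathcal{L}}_2$ are two local systems on $X$ whose restrictions to $X^{(2)}$ are isomorphic, their representations $\tilde\rho_1,\tilde\rho_2$ of $\pi_1(X,x_0)$ become conjugate-equivalent after precomposition with the isomorphism $i_\#$; since precomposition with an isomorphism reflects conjugacy-equivalence, $\tilde\rho_1$ and $\tilde\rho_2$ are themselves conjugate-equivalent, so $\tilde{\mathcal{L}}_1\cong\tilde{\mathcal{L}}_2$ on $X$.

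The only genuine subtlety, and the step I would flag as the main obstacle, is the base-point bookkeeping when $X$ is an infinite or disconnected-looking polyhedron: one must be sure that ``local coefficient system'' is being handled functorially over the whole fundamental groupoid rather than only at one base point, and that the equivalence between systems and representations is natural enough that an isomorphism on $\pi_1$ really does transport systems back and forth without loss of information on higher skeleta. Under the standing conventions (\fullref{Section 1}), $X$ is connected and locally path connected, so $X^{(2)}$ is connected and a single base point suffices, which dispels this worry; the groupoid-versus-group distinction collapses. With that observed, the argument reduces cleanly to the two cited facts plus the elementary representation-theoretic translation, and no further calculation is needed.
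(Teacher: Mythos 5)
Your proposal is correct and takes essentially the same route as the paper: the paper proves the lemma simply by combining the two cited facts from \cite{ff} --- that local coefficient systems on a connected polyhedron correspond, up to conjugate-equivalence, to representations of $\pi_1$, and that the inclusion $X^{(2)}\hookrightarrow X$ induces an isomorphism of fundamental groups --- and your existence/uniqueness translation is precisely the argument being packaged by those citations. (Your only slip is bibliographic, in which of the two references supplies which fact, and this does not affect the mathematics.)
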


\begin{lemma}{\rm \cite[Lemma 4]{delta}}\qua\label{incl}
Suppose $\mathcal C$ is a family of mutually
disjoint closed subsets of a topological space~$Z$,
and let $D=\bigcup_{C\in\mathcal C} C$.
Suppose also that each set~$C\in\mathcal C$ is both closed and open
in~$D$, and $D$ is closed in~$Z$.
Then the inclusions
$$
i_{C}\co (Z,Z-D) \hookrightarrow (Z,Z-C), \quad C\in\mathcal C,
$$
induce an isomorphism
$$
\bigg( 
\sum_{C\in\mathcal C} {i_C}_{*m}
\bigg) 
\co 
H_m (Z,Z-D) 
\cong
\sum_{C\in\mathcal C} H_m(Z,Z-C)
$$
for each $m\geqslant 0$.
\end{lemma}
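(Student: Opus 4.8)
The plan is to realize the abstract additivity assertion geometrically and then reduce it to a direct-sum splitting of singular chains. For each $C\in\mathcal C$ set $V_C = Z - (D - C)$. Since $C$ is open in~$D$, the set $D-C$ is closed in~$D$, and as $D$ is closed in~$Z$ it is closed in~$Z$; hence each $V_C$ is \emph{open} in~$Z$. Using only the mutual disjointness of the members of~$\mathcal C$ one checks the relations $V_C\cap D = C$, $\ V_C - C = Z - D$, $\ V_{C'}\cap C=\emptyset$ for $C'\neq C$, and $\bigcup_{C} V_C = Z$ (the last because $\bigcap_C (D-C) = D - \bigcup_C C = \emptyset$). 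Thus $\mathcal U=\{V_C\}$ is an open cover of~$Z$ in which every member contains $E:=Z-D$ and all pairwise intersections equal~$E$; intuitively $Z$ is glued from the $V_C$ along the common subspace~$E$, with the piece~$C$ sitting inside~$V_C$ away from every other member.

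Next I would record the excision step. For fixed~$C$ the set $D-C$ satisfies $\overline{D-C}=D-C\subset Z-C=\operatorname{int}(Z-C)$, so excising it from the pair $(Z,Z-C)$ gives an isomorphism $H_m(V_C,E)\cong H_m(Z,Z-C)$ induced by the inclusion $(V_C,E)\hookrightarrow(Z,Z-C)$. Consequently the lemma is equivalent to the additivity statement that the inclusions $(V_C,E)\hookrightarrow(Z,E)$ induce an isomorphism $H_m(Z,E)\cong\bigoplus_C H_m(V_C,E)$.

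The heart of the argument is a chain-level decomposition. Applying the small-simplices (barycentric subdivision) machinery to the open cover~$\mathcal U$, I would replace $C_*(Z)$ by the subcomplex $C_*^{\mathcal U}(Z)$ of $\mathcal U$-small chains; since every member of~$\mathcal U$ contains~$E$ one has $C_*^{\mathcal U}(E)=C_*(E)$ and $C_*^{\mathcal U}(V_C)=C_*(V_C)$, so the relative form of the theorem (the two absolute quasi-isomorphisms and the five lemma on the long exact sequences of the pairs) yields $H_m\big(C_*^{\mathcal U}(Z)/C_*(E)\big)\cong H_m(Z,E)$. The key point is then that a $\mathcal U$-small singular simplex whose image is not contained in~$E$ must meet~$D$, and the relations above force it to lie in a \emph{unique}~$V_C$ and to meet only the corresponding~$C$. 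Assigning this ``colour''~$C$ to each such simplex gives a splitting of chain complexes $C_*^{\mathcal U}(Z)/C_*(E)=\bigoplus_C\big(C_*(V_C)/C_*(E)\big)$ (the boundary preserves colour, as faces of a $C$-coloured simplex stay in~$V_C$). Passing to homology gives $H_m(Z,E)\cong\bigoplus_C H_m(V_C,E)$; tracing a small relative cycle $z=\sum_C z_C+z_E$ through the identifications shows that its $C$-component is exactly $(i_C)_{*m}[z]$, since the off-colour parts $z_{C'}$ lie in $V_{C'}\subset Z-C$ and $z_E$ lies in $E\subset Z-C$, so both vanish in $H_m(Z,Z-C)$. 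Hence the resulting isomorphism is precisely the map $\sum_C (i_C)_{*m}$ of the statement.

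I expect the main obstacle to be this chain-level step: one must make the ``colouring'' of small simplices genuinely well defined — which is exactly where the hypotheses that each~$C$ be clopen in~$D$ and that~$D$ be closed in~$Z$ are used, as they are what make each~$V_C$ open and guarantee $V_C\cap D=C$ — and then confirm that the splitting is respected by the boundary and that the induced isomorphism coincides with $\sum_C(i_C)_{*m}$ rather than merely being abstractly isomorphic to it. The relative small-simplices theorem for the possibly infinite cover~$\mathcal U$, together with the observation that singular chains are finite (so that no completion intervenes and one genuinely obtains a direct sum), are the technical points to treat carefully; the point-set verifications of the first paragraph are routine once the hypotheses are invoked correctly.
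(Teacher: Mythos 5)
Your proof is correct, but there is nothing in the paper to compare it against: the paper states this lemma with the citation \cite[Lemma~4]{delta} and gives no proof, so what you have produced is a self-contained argument for an imported statement. Your route is sound throughout. Thickening each $C$ to the open set $V_C=Z-(D-C)$, excising $D-C$ to get $H_m(Z,Z-C)\cong H_m(V_C,Z-D)$, and then splitting the complex of $\mathcal U$--small chains modulo $C_*(Z-D)$ by the ``colour'' of a simplex is a complete proof, and you invoke each hypothesis exactly where it is needed: $C$ open in $D$ together with $D$ closed in $Z$ makes $D-C$ closed in $Z$, hence $V_C$ open, so the small-simplices theorem applies to the cover $\{V_C\}$; $C$ closed in $Z$ (part of the hypothesis that $\mathcal C$ consists of closed subsets) makes $Z-C$ open, which is precisely what legitimizes the excision $\wbar{D-C}=D-C\subset Z-C=\operatorname{int}(Z-C)$; and mutual disjointness forces $V_C\cap V_{C'}=Z-D$ for $C\neq C'$, which simultaneously makes the colouring single-valued, shows the boundary preserves colour modulo $C_*(Z-D)$ (so each $z_C$ is itself a relative cycle in $(V_C,Z-D)$), and kills the off-colour components in $H_m(Z,Z-C)$, so that the resulting isomorphism really is $\sum_C (i_C)_{*m}$ and genuinely lands in the direct sum by finiteness of singular chains. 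The only loose ends are trivial and worth one line each: the degenerate cases $\mathcal C=\emptyset$ (both sides vanish, since $H_m(Z,Z)=0$) and $|\mathcal C|=1$ (where $V_C=Z$ and the map is the identity), and the remark, which you flag yourself, that the relative small-simplices theorem holds for an arbitrary open cover via the five lemma applied to the two long exact sequences, using $C_*^{\mathcal U}(Z-D)=C_*(Z-D)$ because every $V_C$ contains $Z-D$.
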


\begin{lemma}\label{str-by-p}
Suppose that $Y$ is connected,
locally path connected and
semilocally simply connected space; its 
closed subspace~$B$ 
is locally path connected and $\pi_1(Y,Y-B)=0$. 
Then for each path $\alpha \co (I,0,1) \to (Y,B,Y-B)$ 
there exists a path $\beta \sim \alpha $ such that
$\beta ([0,\frac12 ])\subset B$
and $\beta ((\frac12 , 1])\subset Y-B$.
\end{lemma}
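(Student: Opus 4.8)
The plan is to pass to the universal cover of $Y$, where path-homotopy classes are detected by end points alone, and to build the required $\beta$ by hand upstairs. Since $Y$ is connected, locally path connected and semilocally simply connected, it admits a universal covering $q\colon \tilde Y\to Y$; set $\tilde B=q^{-1}(B)$, which is closed (as $B$ is closed) and again locally path connected (as $q$ is a local homeomorphism and $B$ is locally path connected). Lift $\alpha$ to a path $\tilde\alpha$ in $\tilde Y$ starting at a point $\tilde a\in q^{-1}(\alpha(0))\subset\tilde B$ and ending at $\tilde y=\tilde\alpha(1)\in\tilde Y-\tilde B$. Because $\tilde Y$ is simply connected, any two paths in $\tilde Y$ with common end points are homotopic rel end points, so it suffices to produce \emph{some} path $\tilde\beta$ from $\tilde a$ to $\tilde y$ with $\tilde\beta([0,\tfrac12])\subset\tilde B$ and $\tilde\beta((\tfrac12,1])\subset\tilde Y-\tilde B$, and then set $\beta=q\tilde\beta$. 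Indeed $\tilde\beta$ and $\tilde\alpha$ are lifts of $\beta$ and $\alpha$ sharing end points, so projecting a homotopy between them gives $\beta\sim\alpha$, while $q(\tilde B)=B$ and $q(\tilde Y-\tilde B)=Y-B$ furnish the required inclusions.

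To construct $\tilde\beta$ I need two facts about the cover. First, $\tilde Y-\tilde B=q^{-1}(Y-B)$ is path connected: by \fullref{pi1} the hypothesis $\pi_1(Y,Y-B)=0$ says $B$ can be bypassed, so every loop of $Y$ based in $Y-B$ is homotopic rel end points to a loop in $Y-B$, whence $\pi_1(Y-B)\to\pi_1(Y)$ is onto; therefore the part of the universal cover lying over the connected, locally path connected space $Y-B$ is connected, hence path connected. Second, I must locate a point of the path component $\tilde B_0$ of $\tilde a$ in $\tilde B$ from which one can step \emph{directly} into $\tilde Y-\tilde B$. Here $\tilde\alpha$ itself does the work: since $\tilde B$ is locally path connected, $\tilde B_0$ is open and closed in $\tilde B$, so $S_0=\tilde\alpha^{-1}(\tilde B_0)$ is open and closed in the compact set $S=\tilde\alpha^{-1}(\tilde B)$. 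Put $\tau=\max S_0$. Openness of $S_0$ in $S$ together with $\tau=\sup S_0$ forces a right half-neighbourhood $(\tau,\tau+\varepsilon)$ to miss $S$ altogether, so $\tilde q:=\tilde\alpha(\tau)\in\tilde B_0$ while $\tilde\alpha(t)\in\tilde Y-\tilde B$ for all $t\in(\tau,\tau+\varepsilon)$.

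Now assemble $\tilde\beta$ in three stages: a path $\mu$ inside $\tilde B_0$ from $\tilde a$ to $\tilde q$ (possible as both lie in one path component of $\tilde B$); the short arc $\nu=\tilde\alpha|_{[\tau,\tau+\varepsilon']}$ with $\varepsilon'<\varepsilon$, which leaves $\tilde B$ at once; and a path $\lambda$ in $\tilde Y-\tilde B$ from $\tilde\alpha(\tau+\varepsilon')$ to $\tilde y$ (possible by the path connectedness just established). Reparametrising so that $\mu$ fills $[0,\tfrac12]$ and $\nu\cdot\lambda$ fills $[\tfrac12,1]$ gives $\tilde\beta([0,\tfrac12])\subset\tilde B$ and $\tilde\beta((\tfrac12,1])\subset\tilde Y-\tilde B$, and $\beta=q\tilde\beta$ is the desired path.

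I expect the main obstacle to be conceptual rather than computational. One is tempted either to deform the initial part of $\alpha$ into $B$ (which would need $\pi_1(Y,B)=0$, not available here) or to push all of $\alpha$ off $B$ (impossible when $B$ has interior, since $\beta([0,\tfrac12])\subset B$ is then genuinely forced); moreover the crossing point need not lie in the path component of $B$ that $\alpha$ last meets. Passing to the universal cover resolves all of this at once: it trivialises the homotopy class, so that only the existence of a single straightened representative must be arranged, and that in turn reduces to the two structural facts above, both consequences of $\pi_1(Y,Y-B)=0$ and of local path connectedness.
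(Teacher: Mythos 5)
Your proof is correct, and it takes a genuinely different route from the paper's. The paper argues entirely downstairs in $Y$: it sets $t_1=\max\{t\in I \mid \alpha([0,t])\subset B\}$ and splits into two cases according to whether $t_1$ is a limit point of the later return times of $\alpha$ to $B$; in the delicate case it uses semilocal simple connectivity of $Y$ together with local path connectedness of $B$ to splice, inside a small neighbourhood of $\alpha(t_1)$, a path through $B$ homotopic in $Y$ to the corresponding piece of $\alpha$, and then clears the tail with the bypass property of \fullref{pi1}. You instead pass to the universal cover, where homotopy rel end points is detected by end points alone, so only the existence of \emph{some} straightened path from $\tilde a$ to $\tilde y$ must be arranged. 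Both of your supporting facts check out: surjectivity of $\pi_1(Y-B)\to\pi_1(Y)$ does follow from the bypass property, and together with local path connectedness of the open set $Y-B$ it gives path connectedness of $q^{-1}(Y-B)$ by standard covering-space theory (this is essentially Case~1 of the paper's \fullref{conn} --- which you could not have cited anyway, since the paper proves \fullref{conn} \emph{using} the present lemma; your self-contained argument keeps the logic non-circular). Your clopen-component trick ($\tau=\max S_0$ with $S_0=\tilde\alpha^{-1}(\tilde B_0)$ clopen in the compact $S=\tilde\alpha^{-1}(\tilde B)$, forcing $S\cap(\tau,\tau+\varepsilon)=\emptyset$) is sound and neatly eliminates the paper's case distinction: a return to $\tilde B$ just after $\tau$ would be a point of $S$ near $\tau$, hence in $S_0$ by openness, contradicting maximality. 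What each approach buys: the paper's proof is more elementary, using only local modifications of $\alpha$ and no covering-space machinery; yours is conceptually cleaner and shows exactly where each hypothesis enters (semilocal simple connectivity furnishes the universal cover, local path connectedness of $B$ makes the components of $\tilde B$ clopen, and $\pi_1(Y,Y-B)=0$ makes the complement upstairs path connected). One small point you should make explicit: $Y-B$ itself is path connected --- two of its points are joined by a path in the path connected space $Y$, which the bypass property pushes off $B$ --- since your covering argument tacitly assumes this, and it also requires $Y-B\neq\emptyset$, guaranteed here by $\alpha(1)\in Y-B$.
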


That is, not only each path in~$Y$ with end points in
$Y-B$ can be pushed off from $B$ (as it is since $B$
can be bypassed in $Y$; see \fullref{pi1}), 
but also a path with one end point 
in~$B$ and another in~$Y-B$ can be ``half-pushed off'' from~$B$.

\begin{proof}
Put $t_1 = \max \{ t\in I | \alpha ([0,t])\subset B \}$.
It is clear that $t_1 < 1$.
Let $S = \{ t\in I | t>t_1, \alpha (t)\in B\}$.
Consider two cases.

\textbf{Case 1}\qua
Suppose $t_1$ is not a limit point of $S$
(in particular, $S$ is empty).
Then there exists $t_2\in I$ such that $t_2>t_1$
and 
$\alpha ((t_1, t_2])\subset Y-B$.
Let $\alpha _1 (t) = \alpha (tt_1)$, 
$\alpha_2(t) = \alpha (tt_2 + (1-t)t_1)$,
and
$\alpha _3(t) = \alpha (t+(1-t)t_2)$, for $t\in I$.
Take a path $\beta _3\sim \alpha _3$ such that
$\beta_3 (I)\subset Y-B$. 
The path $\beta = \alpha _1 \cdot (\alpha_2\cdot \beta _3)$
has necessary properties.

\textbf{Case 2}\qua
Suppose $t_1$ is a limit point of $S$.
Let $U_1\subset Y$ be a path connected open neighbourhood of
$\alpha (t_1)$ such that each loop at $\alpha (t_1)$
in $U_1$ is homotopic (in $Y$) to a constant path.
Let $V\subset B\cap U_1$ be a path connected open (in $B$)
neighbourhood of $\alpha (t_1)$. 
Then $V = B\cap U_2$ for some open set $U_2\subset Y$. 
Denote $U=U_1\cap U_2$.
There exist $t_2,t_3\in I$ such that
$t_1<t_2 < t_3$, $\alpha ([t_1,t_2])\subset U$,
$\alpha (t_2)\in B$, and $\alpha ((t_2,t_3]) \subset Y-B$.
Denote the ``pieces''
$\alpha _1 (t) = \alpha (tt_1)$,
$\alpha _2 (t) = \alpha (tt_2 + (1-t)t_1)$,
$\alpha _3 (t) = \alpha (tt_3 + (1-t)t_2)$,
and
$\alpha _4 (t) = \alpha (t    + (1-t)t_3)$, 
for $t\in I$.
Take a path
$\beta _2 \co (I,0,1) \to (V, \alpha (t_1), \alpha (t_2))$;
we have $\alpha _2 \sim \beta _2$ (homotopy in $Y$).
Since $B$ can be bypassed in $Y$,
the path $\alpha _4$ is homotopic to some path $\beta _4$
with $\beta _4(I) \subset Y-B$.
Then $\beta = (\alpha _1 \cdot \beta _2)\cdot 
(\alpha _3\cdot \beta _4)$ is the required path.
The Lemma is proved.
\end{proof}

Just as Brooks in his proofs of
\cite[Lemmas~5,~5$^\prime $]{delta},
we will use 
Seifert--van Kampen theorem and 
relative Hurewicz theorem
in our proof of \fullref{iso}.
The next Lemma explains comprehensively,
why the spaces, to which the theorems will be applied here,
are indeed path connected. 

\begin{lemma}\label{conn}
Suppose that spaces $Y$, $B$ satisfy 
the conditions of \fullref{str-by-p}.
Let $p\co \tilde Y\to Y$ be a (arbitrary) covering.
Let $\mathcal C$ be an arbitrary family of path components
of $p^{-1}(B)$ and $D = \bigcup_{C\in\mathcal C} C$.
Then $\tilde Y - D$ is path connected.
\end{lemma}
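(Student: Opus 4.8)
The plan is to connect any two points of $\tilde Y - D$ by a path lying inside $\tilde Y - D$, using the ``half-pushing off'' of \fullref{str-by-p} together with the fact that $B$ can be bypassed (\fullref{pi1}). First I would record the setup. Since all coverings are assumed connected and $Y$ is locally path connected, $\tilde Y$ is locally path connected, hence path connected. Moreover $p$ restricts to a covering $p^{-1}(B)\to B$, so $p^{-1}(B)$ is locally path connected; in particular its path components coincide with its connected components and are open in $p^{-1}(B)$. (I may assume $Y-B\neq\emptyset$, which is implicit in the hypothesis $\pi_1(Y,Y-B)=0$.)

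The first key step is to show that $\tilde Y - p^{-1}(B)=p^{-1}(Y-B)$ is path connected. Given two points there, I would join them by a path $\tilde\gamma$ in the path-connected space $\tilde Y$; then $\gamma=p\tilde\gamma$ has both endpoints in $Y-B$. Since $\pi_1(Y,Y-B)=0$ means $B$ can be bypassed (\fullref{pi1}), $\gamma$ is homotopic rel endpoints to a path $\gamma'$ with $\gamma'(I)\subset Y-B$. Lifting this homotopy starting at $\tilde\gamma$, and using that the lifts of the constant end paths remain constant, I obtain a path $\tilde\gamma'$ between the given points with $p\tilde\gamma'=\gamma'$, so $\tilde\gamma'(I)\subset p^{-1}(Y-B)$.

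The main step is to connect any point $\tilde z\in p^{-1}(B)-D$ to $\tilde Y - p^{-1}(B)$ by a path inside $\tilde Y - D$. Let $C_0$ be the path component of $p^{-1}(B)$ containing $\tilde z$; by hypothesis $C_0\notin\mathcal C$, so $C_0\cap D=\emptyset$. I would pick $\tilde w\in \tilde Y - p^{-1}(B)$ and a path $\tilde\delta$ in $\tilde Y$ from $\tilde z$ to $\tilde w$; its projection $\delta$ is a path $(I,0,1)\to(Y,B,Y-B)$. Applying \fullref{str-by-p} yields $\beta\sim\delta$ rel endpoints with $\beta([0,\frac12])\subset B$ and $\beta((\frac12,1])\subset Y-B$, and lifting this homotopy starting at $\tilde\delta$ gives a path $\tilde\beta$ from $\tilde z$ to $\tilde w$. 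Then $\tilde\beta([0,\frac12])$ is a connected subset of $p^{-1}(B)$ containing $\tilde z$, so by local path connectivity of $p^{-1}(B)$ it lies in the single path component $C_0$ and hence misses $D$, while $\tilde\beta((\frac12,1])\subset p^{-1}(Y-B)\subset \tilde Y - D$. Thus $\tilde\beta(I)\subset \tilde Y - D$. To assemble the conclusion, given $\tilde a,\tilde b\in \tilde Y - D$, I would move each point lying in $p^{-1}(B)-D$ into $\tilde Y - p^{-1}(B)$ by this main step (staying in $\tilde Y - D$), obtaining $\tilde w_a,\tilde w_b\in \tilde Y - p^{-1}(B)$, join $\tilde w_a$ to $\tilde w_b$ inside $\tilde Y - p^{-1}(B)\subset \tilde Y - D$ by the first step, and concatenate.

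I expect the delicate point to be the main step: controlling the initial segment $\tilde\beta([0,\frac12])$ so that it stays in the non-deleted component $C_0$ rather than wandering into some component belonging to $\mathcal C$. This is precisely where the ``half-push off'' of \fullref{str-by-p} (rather than a mere bypass, which would only handle endpoints in $Y-B$) is indispensable, used in tandem with local path connectivity of $p^{-1}(B)$ to identify the connected image of $[0,\frac12]$ with a single path component.
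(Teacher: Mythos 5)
Your proposal is correct and takes essentially the same route as the paper's proof: the same decomposition into the case of two points off $p^{-1}(B)$ (handled via the bypass property and homotopy lifting), the mixed case of a point in $p^{-1}(B)-D$ (handled via the half-push-off of \fullref{str-by-p}), and the general case by concatenating through $\tilde Y - p^{-1}(B)$. The only cosmetic difference is how the lifted initial segment is confined to the non-deleted component: the paper observes that $p$ restricted to that component is a covering of a path component of $B$, whereas you use connectedness of $\tilde\beta\big(\big[0,\frac12\big]\big)$ together with local path connectivity of $p^{-1}(B)$ --- an equivalent argument resting on the same hypothesis that $B$ is locally path connected.
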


\begin{proof}
Take arbitrary points
$\tilde y_0,\tilde y_1\in\tilde Y - D$.
To prove the existence of a path joining them, we
consider three cases.

\textbf{Case 1}\qua 
Suppose $\tilde y_0, \tilde y_1\in \tilde Y - p^{-1}(B)$.
Take a path 
$\tilde\alpha \co (I,0,1)\to (\tilde Y,\tilde y_0,\tilde y_1)$.
The path $\alpha = p (\tilde\alpha )$ in $Y$ joins the points
$p(\tilde y_0), p(\tilde y_1)\in Y-B$.
Since $B$ can be bypassed in $Y$, there exists
a path $\beta $ 
such that $\beta (I)\subset Y-B$ and $\alpha \sim \beta $.
Lift of this homotopy to $\tilde Y$ that starts at $\tilde\alpha $
gives a path over $\beta $
in $\tilde Y - p^{-1}(B)\subset \tilde Y - D$
which joins $\tilde y_0$ to $\tilde y_1$.

\textbf{Case 2}\qua 
Suppose
$\tilde y_0\in p^{-1}(B)-D$ and
$\tilde y_1 \in \tilde Y - p^{-1}(B)$. 
Join $\tilde y_0$ to $\tilde y_1$ with a path 
$\tilde \alpha $ in $\tilde Y$.
The path $\alpha = p (\tilde \alpha )$ in $Y$ begins at
$p(\tilde y_0)\in B$ and ends at
$p(\tilde y_1)\in Y-B$. 
By \fullref{str-by-p}, 
there exists a path $\beta $ such that 
$\beta \big([ 0, \frac12] \big)\subset B$, 
$\beta \big(( \frac12, 1] \big)\subset Y-B$,
and $\alpha\sim\beta $.
The lift of this homotopy to $\tilde Y$ starting at $\tilde\alpha $
gives a path $\tilde \beta $ in $\tilde Y$
between $\tilde y_0$, $\tilde y_1$.
Denote by $\tilde B^0$ the path component of
$p^{-1}(B)$ that contains $\tilde y_0$ and by $B^0$
its image under $p$, ie, the path component of $B$ which contains
$p(\tilde y_0)$.
Since $\smash{p|_{\smash{\tilde B^0}}}\co \tilde B^0 \to B^0$ is a covering,
we have
$\smash{\tilde\beta \big([ 0,\frac12  \big) \subset \tilde B^0
\subset \tilde Y - D}$ 
and 
$\smash{\tilde \beta \big(( \frac12 , 1] \big) 
\subset \tilde Y - p^{-1}(B)
\subset \tilde Y - D}$. 
So, $\tilde\beta (I)\subset \tilde Y - D$.

\textbf{Case 3}\qua 
If $\tilde y_0, \tilde y_1 \in p^{-1}(B)-D$,
take a point
$\tilde y_2 \in \tilde Y - p^{-1}(B)$
and apply Case~2 to the pairs of points
$\tilde y_0$, $\tilde y_2$ and $\tilde y_1$, $\tilde y_2$.
\end{proof}

Now we generalize and unify the lemmas
\cite[Lemmas~5,~$5^\prime $]{delta}.
The proof below contains nothing essentially new
in comparison with those of Brooks,
except reference to \mbox{\fullref{conn}}; 
and we have slightly changed the sequence of exploited ideas. 
We give the proof nevertheless, for completeness.

\begin{lemma}\label{iso}
Suppose that the conditions of \fullref{str-by-p}
are fulfilled, and
for some integer $n\geqslant 3$
we have $\pi_m(Y,Y-B) = 0$ for all
$1\leqslant m\leqslant n-1$.
Let $f\co X\to Y$ be a map
of connected, locally path connected space $X$;
and let $p\co \hat Y\to Y$ be its Hopf covering.
Let $\mathcal C$ be an arbitrary family of path components
of $p^{-1}(B)$ and $D = \bigcup_{C\in\mathcal C} C$.
Then 
$\pi_m(\hat Y,\hat Y-D,\hat y^\prime )=0$,
$\hat y^\prime \in \hat Y - D$,
for all $1\leqslant m\leqslant n-1$,
and
the inclusions
$$
i_{C}\co (\hat Y, \hat Y - D) \hookrightarrow (\hat Y, \hat Y - C),
\quad
C\in\mathcal C,
$$
induce an isomorphism
$$
\pi_n(\hat Y,\hat Y - D, \hat y^\prime ) \cong
\sum\limits_{C\in\mathcal C} 
\pi_n(\hat Y,\hat Y-C,\hat y^\prime ).
$$
\end{lemma}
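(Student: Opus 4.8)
The plan is to prove both assertions by transporting the hypotheses on the pair $(Y,Y-B)$ up the covering $p$, converting the vanishing to homology so as to apply \fullref{incl}, and then converting back by the relative Hurewicz theorem. First I record that by \fullref{conn} all of the spaces in sight are path connected: taking $\mathcal C$ to be the single family $\{C\}$, the whole collection of path components of $p^{-1}(B)$, and the given family respectively, the sets $\hat Y-C$, $\hat Y-p^{-1}(B)$ and $\hat Y-D$ are path connected (and $\hat Y$ is path connected, being a connected, locally path connected covering). This is exactly what lets me choose base points freely and invoke Seifert--van Kampen and relative Hurewicz below.

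Next I would treat the \emph{full} preimage. Since $D^m$ is simply connected for $m\geqslant2$, every map $(D^m,S^{m-1})\to(Y,Y-B)$ lifts, uniquely after fixing the image of the base point, to a map $(D^m,S^{m-1})\to(\hat Y,\hat Y-p^{-1}(B))$, and homotopies lift as well by the homotopy lifting property of $p$; hence $p$ induces an isomorphism $\pi_m(\hat Y,\hat Y-p^{-1}(B))\cong\pi_m(Y,Y-B)$ for $m\geqslant2$, which vanishes for $2\leqslant m\leqslant n-1$ by hypothesis. For $m=1$ I argue directly: by \fullref{pi1} the set $B$ can be bypassed in $Y$, and lifting each bypass homotopy along $p$ shows that $\pi_1(\hat Y-p^{-1}(B))\to\pi_1(\hat Y)$ is onto, i.e.\ $\pi_1(\hat Y,\hat Y-p^{-1}(B))=0$. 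Thus the pair $(\hat Y,\hat Y-p^{-1}(B))$ is $(n-1)$--connected. For a general $D$ the same surjectivity persists, since $\pi_1(\hat Y-p^{-1}(B))\to\pi_1(\hat Y)$ factors through $\pi_1(\hat Y-D)$; hence $\pi_1(\hat Y,\hat Y-D)=0$ as well.

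Now I pass to homology. The components $C$ are open in $p^{-1}(B)$ (a covering of the locally path connected $B$), so each $C$, each $D$ and $p^{-1}(B)$ itself are closed in $\hat Y$ and the hypotheses of \fullref{incl} are met. The easy direction of relative Hurewicz applied to the $(n-1)$--connected pair $(\hat Y,\hat Y-p^{-1}(B))$ gives $H_m(\hat Y,\hat Y-p^{-1}(B))=0$ for $m\leqslant n-1$; feeding this into \fullref{incl} for the full family of components forces $H_m(\hat Y,\hat Y-C)=0$ for every $C$ and every $m\leqslant n-1$, and then \fullref{incl} for the given family $\mathcal C$ yields $H_m(\hat Y,\hat Y-D)=0$ for $m\leqslant n-1$ together with the splitting $H_n(\hat Y,\hat Y-D)\cong\sum_{C\in\mathcal C}H_n(\hat Y,\hat Y-C)$ induced by the inclusions $i_C$.

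Finally I would upgrade the homology statements to homotopy by the relative Hurewicz theorem, the step where the fundamental group makes the argument delicate and which I expect to be the main obstacle. Because $\hat Y$ is not simply connected, I work with local coefficients (equivalently, pass to the universal cover of $\hat Y$, where \fullref{incl} and ordinary Hurewicz both still apply); \fullref{z} guarantees that the relevant coefficient system, determined on a $2$--skeleton, is defined over all of $\hat Y$. Starting from $\pi_1(\hat Y,\hat Y-D)=0$ and inducting on $m$, the inductive hypothesis supplies the isomorphism $\pi_1(\hat Y-D)\cong\pi_1(\hat Y)$ that the theorem requires, so the vanishing of $H_m(\hat Y,\hat Y-D)$ upgrades to $\pi_m(\hat Y,\hat Y-D)=0$ for $2\leqslant m\leqslant n-1$. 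In degree $n$ the Hurewicz isomorphism $\pi_n(\hat Y,\hat Y-D)\cong H_n(\hat Y,\hat Y-D)$ is natural in the inclusions $i_C$, and likewise for each $(\hat Y,\hat Y-C)$; combined with the homology splitting above this transports to the desired isomorphism $\pi_n(\hat Y,\hat Y-D)\cong\sum_{C\in\mathcal C}\pi_n(\hat Y,\hat Y-C)$. The crux throughout is keeping the $\pi_1$--isomorphisms and the single local system compatible across all the pairs $(\hat Y,\hat Y-C)$, $(\hat Y,\hat Y-D)$ and the maps $i_C$ between them.
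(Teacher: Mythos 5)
Your first two steps are sound (the path connectedness via \fullref{conn}, the covering isomorphism $\pi_m(\hat Y,\hat Y-p^{-1}(B))\cong\pi_m(Y,Y-B)$ for $m\geqslant 2$, and the surjectivity arguments giving $\pi_1(\hat Y,\hat Y-D)=0$), but the final upgrade from homology to homotopy --- which you yourself flag as the crux --- contains a genuine gap, and it is exactly the point where the paper has to work hardest. From $\pi_1(\hat Y,\hat Y-D)=0$ you only get \emph{surjectivity} of $\pi_1(\hat Y-D)\to\pi_1(\hat Y)$; your induction cannot supply the injectivity, because by exactness of the sequence of the pair, injectivity is equivalent to triviality of the image of $\pi_2(\hat Y,\hat Y-D)\to\pi_1(\hat Y-D)$, i.e.\ it is entangled with the very vanishing $\pi_2(\hat Y,\hat Y-D)=0$ that the Hurewicz step at $m=2$ is supposed to deliver: the argument is circular. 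Without the $\pi_1$--isomorphism, the relative Hurewicz theorem in its general form identifies $H_m$ only with the quotient of $\pi_m(\hat Y,\hat Y-D)$ by the $\pi_1(\hat Y-D)$--action, so vanishing of $H_m$ does not yield vanishing of $\pi_m$, and in degree $n$ you do not get the isomorphism you need. Equivalently, in the universal cover $q\colon\tilde Y\to\hat Y$ the subspace $\tilde Y-q^{-1}(D)$ must be shown \emph{simply connected} before the ordinary relative Hurewicz theorem applies there, and nothing in your outline proves this. The paper closes precisely this hole with a Seifert--van Kampen argument: writing $\tilde Y=(\tilde Y-q^{-1}(D))\cup(\tilde Y-q^{-1}(p^{-1}(B)-D))$, the intersection $\tilde Y-(pq)^{-1}(B)$ is path connected by \fullref{conn} and simply connected by exactness (this uses $\pi_1=\pi_2=0$ for the pair $(\tilde Y,\tilde Y-(pq)^{-1}(B))$, hence $n\geqslant 3$); since $\tilde Y$ is simply connected, van Kampen forces both pieces --- in particular $\tilde Y-q^{-1}(D)$, and, taking $\mathcal C=\{C\}$, each $\tilde Y-q^{-1}(C)$ --- to be simply connected. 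You need this (or an equivalent) argument; I see no way to extract the injectivity from the data you have assembled.

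A second, related defect: your homology computations are carried out downstairs, for $(\hat Y,\hat Y-D)$ with constant integer coefficients, whereas the Hurewicz upgrade --- whether phrased with local coefficients or in the universal cover, as your parenthesis suggests --- needs the homology of the pairs $(\tilde Y,\tilde Y-q^{-1}(D))$ and $(\tilde Y,\tilde Y-q^{-1}(C))$ upstairs, and neither the vanishing nor the degree--$n$ splitting of the integral $H_*(\hat Y,\hat Y-D)$ transports to $\tilde Y$. Fortunately your scheme does go through verbatim one level up, and that is the paper's actual route: the pair $(\tilde Y,\tilde Y-(pq)^{-1}(B))$ is $(n-1)$--connected by the covering isomorphism applied to $pq$, its subspace is simply connected as above, Hurewicz gives $H_m(\tilde Y,\tilde Y-(pq)^{-1}(B))=0$ for $m\leqslant n-1$, and then \fullref{incl} applied in $\tilde Y$ to the disjoint closed families $\{q^{-1}(C)\}$ (the paper uses a Mayer--Vietoris sequence for the two-set decomposition, which amounts to the same thing) yields the vanishing and the degree--$n$ splitting there; naturality of Hurewicz and the covering isomorphism then bring everything back down to $\hat Y$. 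So the repair is concrete: move the whole of your homology step into $\tilde Y$ and insert the van Kampen argument before invoking Hurewicz; as written, the proposal is incomplete at its self-declared crux.
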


\begin{proof}
Let $q\co \tilde Y\to \hat Y$ be a universal covering.
Note that \fullref{conn} implies
path connectedness of 
the spaces
$\tilde Y - q^{-1}(D)$, $\tilde Y - q^{-1}(p^{-1}(B)-D)$, and
$\tilde Y - (pq)^{-1}(B)$.
We will for brevity omit the basic points in notation
of their homotopy groups.

The composition $pq\co \tilde Y\to Y$ induces
an isomorphism
$$
\pi_m(\tilde Y,\tilde Y-(pq)^{-1}(B)) \cong
\pi_m(Y,Y-B)
$$
for each $m>0$ (for $m=1$ just a bijection)
\cite[Theorem~7.2.8]{sp}. 
So, we have 
$\pi _m (\tilde Y, \tilde Y - (pq)^{-1}(B)) = 0$
for all $1\leqslant m \leqslant n-1$.
In particular,
$$
\pi_1(\tilde Y,\tilde Y-(pq)^{-1}(B) ) =
\pi_2(\tilde Y,\tilde Y-(pq)^{-1}(B) ) = 0,
$$
so by exactness of the homotopy sequence, 
$
\pi_1(\tilde Y-(pq)^{-1}(B)) \cong \pi_1(\tilde Y)  = 0.
$

Applying the relative Hurewicz theorem 
\cite[Theorem~7.5.4]{sp}
to the pair of spaces
$(\tilde Y,\tilde Y-(pq)^{-1}(B))$,
we obtain
$H_m(\tilde Y, \tilde Y-(pq)^{-1}(B))=0$
for all $1\leqslant m\leqslant n-1$.

Represent
\begin{equation}\label{**}
\tilde Y = 
(\tilde Y - q^{-1}(D)) \cup (\tilde Y - q^{-1}(p^{-1}(B)-D)).
\tag{$**$}
\end{equation}
Note that 
$$
(\tilde Y - q^{-1}(D)) \cap (\tilde Y - q^{-1}(p^{-1}(B)-D))
=
\tilde Y - (pq)^{-1}(B).
$$
Writing the Mayer--Vietoris exact sequence
for the pairs $(\tilde Y, \tilde Y - q^{-1}(D))$ and
$(\tilde Y, \tilde Y - q^{-1}(p^{-1}(B)-D))$,
\begin{multline*}
\ldots \to
H_i (\tilde Y, \tilde Y - (pq)^{-1}(B))
\to
H_i (\tilde Y, \tilde Y - q^{-1}(D))
\oplus
H_i (\tilde Y, \tilde Y - q^{-1}(p^{-1}(B)-D))
\\
\to
H_i (\tilde Y, \tilde Y) 
\to 
H_{i-1} (\tilde Y, \tilde Y - (pq)^{-1}(B))
\to \ldots 
\end{multline*}
we obtain
$H_m(\tilde Y,\tilde Y-q^{-1}(D)) = 0$ 
for all $1\leqslant m\leqslant n-1$. 

Apply Seifert--van Kampen theorem to the 
representation \eqref{**}
of simply connected space $\tilde Y$. 
Since the intersection 
$\tilde Y - (pq)^{-1}(B)$
of the two subspaces
was proved above to be simply connected, 
the two subspaces are also simply connected.

The relative Hurewicz theorem 
applied to the pair
$(\tilde Y, \tilde Y - q^{-1}(D))$ 
of simply connected spaces gives
$\pi_m(\tilde Y,\tilde Y - q^{-1}(D))=0$,
and therefore $\pi_m(\hat Y,\hat Y-D)=0$
for all $1\leqslant m\leqslant n-1$~\cite[Theorem~7.2.8]{sp}.
The first statement is proved.

The inclusions
$
j_{C}\co (\tilde Y,\tilde Y-q^{-1}(D)) \hookrightarrow
(\tilde Y, \tilde Y - q^{-1}(C)) 
$, 
$C\in\mathcal C$,
induce by \mbox{\fullref{incl}} an isomorphism
$$
H_n (\tilde Y,\tilde Y - q^{-1}(D)) \cong
\sum\limits_{C\in\mathcal C} H_n(\tilde Y,\tilde Y-q^{-1}(C)).
$$
By naturality of the Hurewicz isomorphism
and commutativity of diagrams
$$
\begin{CD}
(\tilde Y, \tilde Y - q^{-1}(D)) @> j_{C} >>
(\tilde Y, \tilde Y - q^{-1}(C))
\\
@V q VV @V q VV
\\
(\hat Y, \hat Y - D) @> i_{C} >>
(\hat Y, \hat Y - C),
\end{CD}
$$
the map from left to right of the sequence
$$
\pi_n (\hat Y,\hat Y-D)\cong 
\pi_n(\tilde Y,\tilde Y-q^{-1}(D)) \cong
\sum\limits_{C\in\mathcal C} 
\pi_n(\tilde Y,\tilde Y-q^{-1}(C))
\cong
\sum\limits_{C\in\mathcal C} \pi_n (\hat Y,\hat Y-C),
$$
where
the central map is induced by the family $\{ j_{\mathcal C} \}$
and the outer maps by $q$,
is induced by the family $\{ i_{\mathcal C} \}$.
This finishes the proof.
\end{proof}

\section[Proof of \ref{minim}]{Proof of \fullref{minim}}

For theory of obstructions to deformations, see
Blakers and Massey \cite[(4.4)]{obstr}, Hu \cite[Chapter~VI,
  Exercise~E]{hu} and 
\cite[1]{hu-obstr}, and Schirmer \cite[\textbf{2}]{Koinz}.
We refer below to the paper \cite{Koinz} which 
contains a good summary of results 
(but only in the case of simply
connected subspace; with evident changes they hold true
in general case, for coefficients in local systems of groups).

\begin{proof}
By (2) of \fullref{triangle}
we may assume that $X$ is itself a polyhedron of dimension
less than or equal to~$n$.
Moreover, since $\pi_m(Y,Y-B)=0$ for all
$1\leqslant m\leqslant n-1$,
we assume that $f^{-1}(B)\cap X^{(n-1)} = \emptyset $
\cite[p~57]{Koinz}.
Hence we consider only the case of $\dim X=n\geqslant 3$.

Let $p\co \hat Y\to Y$ and $\hat f\co X\to \hat Y$ be 
a Hopf covering and a Hopf lift for $f$.
Let 
$\mathcal C$ be the family of all those
path components $\hat B$ of $p^{-1}(B)$ for which
${\hat f }^{-1}(\hat B)$ is either empty or an inessential
preimage class. 
Denote $D=\bigcup_{C\in\mathcal C} C$.
For $C\in\mathcal C$, 
the pullback under $\hat f$ of
the local coefficient system
$\{ \pi_n (\hat Y, \hat Y - C, y^\prime ) \}$ on
$\hat Y-C$ is
a local system on $X^{(n-1)}$; it
extends uniquely up to isomorphism to
a local system $\Gamma _{C}$ on~$X$ by \mbox{\fullref{z}}.
Similarly, the local system 
$\{ \pi_n(\hat Y,\hat Y-D, y^\prime )\}$ on
$\hat Y-D$ gives a local system $\Gamma_{D}$ on~$X$.

For each $C\in\mathcal C$ and $\hat y^{\prime } \in \hat Y - D$
the homomorphism
$$
\pi_n(\hat Y,\hat Y-D, \hat y^\prime )\to 
\pi_n(\hat Y,\hat Y-C, \hat y^\prime )
$$
induced by the inclusion
$$
i_{C}\co (\hat Y,\hat Y-D)\hookrightarrow (\hat Y,\hat Y-C)
$$
gives in its turn a homomorphism
$k_{C}\co \Gamma_{D}\to \Gamma_{C}$.
By \fullref{iso}, 
$$
\bigg( \sum\limits_{C\in\mathcal C} 
k_{C} \bigg) \co \Gamma_{D}\to
\sum\limits_{C\in\mathcal C} \Gamma_{C}
$$
is an isomorphism, therefore 
$$
\bigg( \sum\limits_{C\in\mathcal C} {k_C}_{*} \bigg) \co 
H^n(X,\Gamma_{D})\to
\sum\limits_{C\in\mathcal C} H^n(X,\Gamma_{C})
$$
is also an isomorphism.

For $C\in\mathcal C$,
let $\omega _{C}\in H^n(X,\Gamma_{C})$ be the first 
obstruction to deforming the map
$\hat f\co X\to \hat Y$ into $\hat Y-C$.
Let $\omega _{D}\in H^n(X,\Gamma_{D})$ be the first
obstruction to deforming the map
$\hat f\co X\to \hat Y$ into $\hat Y-D$.
Then $\omega_{C} = {k_C}_{*} (\omega_{D})$.
From our definition of the family $\mathcal C$ 
and
\fullref{essent}
it follows 
that
for each $C\in\mathcal C$ the map $\hat f$ 
can be deformed into $\hat Y-C$,
hence $\omega _{C} =0$; therefore
$$
\bigg( \sum\limits _{C\in\mathcal C} {k_C}_{*} \bigg) 
(\omega _{D}) =
\sum\limits_{C\in\mathcal C} \omega_{C} = 0.
$$
Since $\left( \sum_{C\in\mathcal C} {k_C}_{*} \right)$
is an isomorphism, we obtain
$\omega_{D} = 0$.
So, there exists  
a map $\smash{\hat g \sim \hat f}$ such that
$\smash{\hat g (X) \subset \hat Y-D}$ (see \cite[p~57]{Koinz}).
The map $\smash{g=p\hat g}$ is the desired one. 
\end{proof}

We give a simple corollary of our theorem.

\begin{cor}
Suppose that
$X$ is a finite-dimensional connected polyhedron,
$Y$ is a connected 
triangulated topological manifold without boundary, 
$B$ is a finite nonempty subpolyhedron of $Y$,
and $\dim X = \dim Y-\dim B \geqslant 3$. 
Then for each map $f\co X\to Y$  
there exists a map $g\sim f$ such that
each Nielsen preimage class of $g\co X\to Y\supset B$ is
topologically essential; in particular,
$\NT (f,B) = \MPCL (f,B)$.
\end{cor}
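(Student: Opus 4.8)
The plan is to obtain this statement as a direct application of \fullref{minim}, by setting $n := \dim X$ and verifying each of the hypotheses of the main theorem. By assumption $n = \dim X = \dim Y - \dim B \geq 3$, so the single integer $n$ serves simultaneously for the two conditions of \fullref{minim} that involve $n$. The routine hypotheses are immediate: $X$ and $Y$ are connected by assumption and locally path connected (every polyhedron and every manifold is); $Y$, being a topological manifold, is locally contractible and hence semilocally simply connected; and $B$, being a finite subpolyhedron of the Hausdorff space $Y$, is compact, hence closed in $Y$, while as a polyhedron it is locally path connected. Finally, since $X$ is itself a polyhedron with $\dim X = n$, it is trivially dominated by a polyhedron of dimension $\leq n$, namely by itself.

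The one substantive point is the vanishing $\pi_m(Y,Y-B)=0$ for all $1 \leq m \leq n-1$. Here I would use that $B$ is a subpolyhedron of the triangulated manifold $Y$ of codimension $c := \dim Y - \dim B = n$, and establish that the pair $(Y,Y-B)$ is $(c-1)$--connected. Given a map $\phi\co (D^m,S^{m-1}) \to (Y,Y-B)$ representing a class in $\pi_m(Y,Y-B)$ with $1 \leq m \leq n-1$, I would first approximate $\phi$ by a simplicial map with respect to suitable subdivisions, keeping $\phi|_{S^{m-1}}$ inside $Y-B$, and then put it in general position relative to the subcomplex $B$. Since the dimension count gives $m + \dim B \leq (n-1) + (\dim Y - n) = \dim Y - 1 < \dim Y$, general position forces the image of $D^m$ to be disjoint from $B$; thus $\phi$ is deformable rel $S^{m-1}$ into $Y-B$, which is exactly $\pi_m(Y,Y-B)=0$. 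The case $m=1$ is, in particular, the bypassing property recorded in \fullref{pi1}.

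With all the hypotheses of \fullref{minim} verified, the theorem applies and produces a map $g \sim f$ all of whose Nielsen preimage classes are topologically essential, whence $\NT(f,B)=\MPCL(f,B)$. The main obstacle is precisely the general-position step: the codimension count is clean for a PL manifold, but $Y$ is only assumed to be a triangulated topological manifold, which need not carry a PL structure. To handle this I would localize near $B$, replacing global general position by regular-neighbourhood and link arguments; the links of the simplices of $B$ in $Y$ are the relevant data, and the codimension $n = \dim Y - \dim B$ is exactly what guarantees that a transverse $(n-1)$--complex meets $B$ in the empty set. This local reduction is the only place where the proof needs more than a formal citation of \fullref{minim}.
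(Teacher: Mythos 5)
Your route is the same as the paper's: reduce the corollary to \fullref{minim} with $n=\dim X$, the only substantive hypothesis being $\pi_m(Y,Y-B)=0$ for $1\leqslant m\leqslant n-1$, which the paper isolates as a separate lemma (\fullref{rel-groups}) and proves exactly along your lines --- first a simplicial approximation $g$ of a given map $(D^m,\partial D^m)\to (Y,Y-B)$ with respect to a barycentric subdivision fine enough that the boundary sphere stays in simplices missing $B$, then a push-off of $g(D^m)$ from $B$ justified by the codimension count $\dim g(D^m)+\dim B\leqslant (n-1)+(\dim Y-n)<\dim Y$. Your routine verifications (local path connectedness of $X$ and $Y$, semilocal simple connectedness of the manifold $Y$, $B$ compact hence closed and locally path connected, $X$ dominated by itself) are all correct. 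One small omission: the paper's \fullref{rel-groups} also assumes $Y-B$ connected, which is needed for the basepoint conventions on $\pi_m(Y,Y-B)$; in the corollary this holds because a closed subset of dimension $\leqslant \dim Y-2$ cannot disconnect a connected manifold, and you should record that check.

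The genuine gap is the one you flag yourself: how to actually move the simplicial image off $B$ when $Y$ is only a \emph{triangulated topological} manifold, so PL general position is not available. Your proposed repair via links and regular neighbourhoods does not work as sketched: in a non-combinatorial triangulation the links of the simplices of $B$ need not be PL spheres (they are only homotopy/homology spheres), so the regular-neighbourhood and link machinery underlying general position fails verbatim, which is precisely why the difficulty cannot simply be ``localized away''. The paper closes this exact step by a citation rather than an argument: it applies \cite[Theorem~5.3]{PL-topology} to the subpolyhedra $B$, $g(D^m)$, $g(\partial D^m)$ of $Y$ to obtain an ambient isotopy $F$ of $Y$, constant on $g(\partial D^m)$, with $F_1\bigl(g(D^m)-g(\partial D^m)\bigr)\cap B=\emptyset$; then $\{ F_t g\}$ is the required compression of the disc into $Y-B$, rel its boundary. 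With that one reference substituted for your sketch, your argument coincides with the paper's proof; without it, the non-PL case remains unproved in your proposal.
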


It follows easily 
from \fullref{minim} and the next easy Lemma.

\begin{lemma}\label{rel-groups}
Suppose that $Y$ is a connected triangulated 
topological manifold without boundary, and
a subset $B$ of $Y$ is a finite subpolyhedron
such that $\dim B < \dim Y$ and $Y-B$ is connected.
Then
$\pi_k (Y,Y-B) = 0$ for all 
$1\leqslant k \leqslant \dim Y - \dim B - 1$.
\end{lemma}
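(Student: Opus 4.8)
The plan is to deduce this from the fact that the pair $(Y,Y-B)$ is $(\dim Y-\dim B-1)$--connected, which is a general position statement. Write $d=\dim Y$ and $b=\dim B$; if $d-b\leqslant 1$ the index range is empty and there is nothing to prove, so assume $d-b\geqslant 2$. Fix a base point $y_0\in Y-B$. By the usual description of relative homotopy groups, $\pi_k(Y,Y-B,y_0)=0$ for a given $k$ means that every map $\gamma\co(D^k,S^{k-1})\to(Y,Y-B)$ is homotopic, through maps of pairs keeping the base point at $y_0$, to a map whose image lies in $Y-B$. Thus it suffices, for each $\gamma$ and each $k$ with $1\leqslant k\leqslant d-b-1$, to push $\gamma(D^k)$ off $B$ by a homotopy that keeps $\gamma(S^{k-1})$ inside the open set $Y-B$.

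First I would make $\gamma$ polyhedral. Choosing a triangulation of $Y$ in which $B$ is a subcomplex and a sufficiently fine triangulation of $D^k$, the simplicial approximation theorem replaces $\gamma$ by a homotopic PL map; since $Y-B$ is open and $\gamma(S^{k-1})\subset Y-B$, the approximating homotopy may be taken so small that $\gamma(S^{k-1})$ never leaves $Y-B$ and $y_0$ is fixed. The heart of the matter is then general position: the image of the $k$--disk has dimension at most $k$ and $B$ has dimension $b$, and $k+b\leqslant(d-b-1)+b=d-1<d=\dim Y$, so a generic perturbation makes the image meet $B$ in a set of dimension at most $k+b-d\leqslant -1$, that is, disjoint from $B$. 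Because $B$ is a finite, hence compact, subpolyhedron, I would realize this perturbation by covering $B$ with finitely many Euclidean charts of $Y$ and pushing the image off $B$ one chart at a time by small local moves, using in each chart the elementary fact that a $k$--dimensional polyhedron can be displaced off a $b$--dimensional one in $\mathbf R^d$ whenever $k+b<d$. These moves can be kept small and supported away from $S^{k-1}$, so $\gamma(S^{k-1})$ stays in $Y-B$; the resulting map has image in $Y-B$, as required.

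The step I expect to be the main obstacle is that $Y$ is only assumed to be a triangulated topological manifold, not a PL manifold, so the PL general position theorem for maps into a PL manifold does not apply verbatim. I would get around this by carrying out the general position moves inside Euclidean coordinate charts, where only the polyhedral structure of the compact set $B$ and the inequality $k+b<d$ are used and the ambient PL structure is irrelevant; compactness of $B$ reduces the construction to finitely many such local displacements. A minor point is the case $k=1$, where $\pi_1(Y,Y-B,y_0)$ is only a pointed set: there the same argument pushes a single path off the codimension $\geqslant 2$ polyhedron $B$ (using $1+b<d$), and the assumed connectedness of $Y-B$ ensures that the relative $\pi_1$ is being computed with the correct, path-connected complement.
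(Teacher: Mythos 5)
Your reduction agrees with the paper's up to the crucial step: the paper also invokes the compression criterion for relative homotopy classes (citing Postnikov), passes to an iterated barycentric subdivision so that $f(\partial D^k)$ lies in closed simplices disjoint from $B$, and takes a simplicial approximation $g$ together with a homotopy of pairs $(D^k,\partial D^k)\times I\to (Y,Y-B)$. At the final step, however, the paper does not argue in charts: it applies a PL general position theorem (Theorem~5.3 of its PL topology reference) to the subpolyhedra $B$, $g(D^k)$, $g(\partial D^k)$ of $Y$, producing an ambient isotopy of $Y$, constant on $g(\partial D^k)$, that moves $g(D^k)-g(\partial D^k)$ off $B$ --- that is, it tacitly uses the triangulation of $Y$ as a PL structure.

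The step you yourself flagged as the main obstacle is where your proposal genuinely breaks, and your chart-by-chart workaround does not repair it. Being a polyhedron is not a chart-intrinsic notion: a Euclidean chart of a merely topological manifold need not be PL with respect to the given triangulation, so in chart coordinates $B\cap U$ is only a topologically embedded compactum of covering dimension at most $b$, not a polyhedron in the chart's linear structure. Your ``elementary displacement fact'' really uses linearity (or at least Hausdorff-dimension/Lipschitz control): covering dimension alone does not make difference sets small --- the Cantor set $C$ has dimension $0$ yet $C-C=[-1,1]$, and topologically embedded arcs in $\mathbf{R}^d$ can even have positive Lebesgue measure --- so ``generic small translation damped by a bump function'' cannot be justified in a topological chart. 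Decisively, the statement in the generality you are aiming at is false, so no such argument can exist: let $Y=S^5$ carry the Edwards double-suspension triangulation $\Sigma^2 P$ of the Poincar\'e homology $3$--sphere $P$, and let $B$ be the suspension circle, a finite $1$--dimensional subcomplex. Then $Y-B$ deformation retracts onto $P$, so it is connected, and the exact sequence of the pair gives $\pi_2(Y,Y-B)\cong\pi_1(P)\neq 0$, although $2\leqslant \dim Y-\dim B-1=3$; the local fundamental groups of the complement near points of $B$ are nontrivial, so even arbitrarily small pushes cannot clear $B$. Hence the PL compatibility you tried to discard is essential: the lemma, and the paper's proof via the cited general position/ambient isotopy theorem, must be read for combinatorially triangulated (PL) manifolds. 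Under that reading your local construction can be carried out (choose the charts PL), but then it amounts to reproving the cited theorem rather than fixing the difficulty you identified.
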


\begin{proof}
Let
$1\leqslant k \leqslant \dim Y - \dim B - 1$.
It suffices to prove that
for an arbitrary map $f\co (D^k, \partial D^k, x_0) \to (Y,Y-B,y_0)$
there exists a homotopy $\{ f_t \}$ of the form
$(D^k, \partial D^k)\times I \to (Y,Y-B)$, $f_0=f$,
such that $f_1 (D^k) \subset Y-B$
(see eg Postnikov \cite[p~342]{Postnikov}).
Since $B$ is compact,
there exists an integer $r$ such that 
$f(\partial D^k)$ is contained in those (closed) simplices of
the $r$--th barycentric subdivision 
of the given triangulation of $Y$, which do not intersect $B$.
We replace the original triangulation of $Y$ by its 
$r$--th barycentric subdivision, saving the same symbol $Y$.
There exists 
a simplicial approximation $g$ for $f$
such that $g$ and $f$ are homotopic by a homotopy of the
form $(D^k , \partial D^k)\times I \to (Y,Y-B)$
(see eg Spanier \cite[Theorem~3.4.8]{sp}).
Applying \cite[Theorem~5.3]{PL-topology} to subpolyhedra
$B$, $g(D^k)$, $g(\partial D^k)$ of the manifold $Y$,
we obtain an isotopy $F\co Y\times I\to Y\times I$ 
of $Y$ constant on $g(\partial D^k)$
such that $F_1 (g(D^k) - g(\partial D^k)) \cap B = \emptyset $
and hence $F_1 g (D^k) \subset Y-B$
(as usual, $F_t \co Y\to Y$ is the $t$--level map of the isotopy $Y$,
that is, the composition 
$y\mapsto F(y,t) = (y^\prime , t)\mapsto y^\prime  = F_t(y)$).
Then $\{ g_t = F_t g \} \co (D^k ,\partial D^k)\to (Y,Y-B)$ 
is a homotopy
from $g_0 = g$ to a map $g_1\co (D^k,\partial D^k)\to (Y,Y-B)$
such that $g_1(D^k) \subset Y-B$. 
This proves the Lemma.
\end{proof}

\section{Some other settings}\label{many}

Here we recall 
how another, at first sight more general, setting
\cite{fro2,fro1} 
can be reduced to the preimage problem.

Let 
$f_1,\ldots , f_r\co X\to Y$ be continuous maps,
$B\subset Y$ a nonempty subset. 
The common preimage set is
$$
\Pr (f_1,\ldots ,f_r,B) =
\{ x\in X | f_1 (x) = \ldots = f_r(x) \in B \} .
$$
In particular, if $B$ consists of just one point,
this is set of common roots; for $B=Y$
this is the coincidence set $\Coin (f_1, \ldots , f_r)$
(the possibility to reduce this setting to the preimage problem
was noted in \cite{Koinz,dk}).
This setting is equivalent to the following preimage problem:
$$
\Delta \{ f_k \} _{k=1}^{r} \co
X \to Y^r\supset \Delta B  .
$$
Indeed,
$
\Pr (f_1,\ldots , f_r,B) = (\Delta \{ f_k \} _{k=1}^{r} )^{-1}
(\Delta B)
$,
and $r$--tuples of homotopies of maps $f_1,\ldots , f_r$ 
are in one-to-one correspondence with homotopies of the
diagonal product
$\Delta \{ f_k \} _{k=1}^{r}$.
Hence the problem of estimating the numbers
\begin{align*}
\MP (f_1,\ldots , f_r,B) &=\mskip-3mu 
\min\limits_{g_1\smallsim  f_1, \ldots , g_r\smallsim f_r}
\mskip-3mu|\Pr (g_1,\ldots , g_r, B)|
\\\tag*{\hbox{and}}
\MPCL (f_1,\ldots , f_r,B) &= \mskip-3mu
\min\limits_{g_1\smallsim f_1, \ldots , g_r\smallsim f_r}
\mskip-3mu |\{ \text{Nielsen classes of } \Pr (g_1,\ldots , g_r, B) \} |
\end{align*}
(where Nielsen classes for this general case are \textit{defined}
as those of the corresponding preimage problem;
note that for coincidences of two maps this definition
agrees with the standard one) is equivalent to finding
estimates for 
$\MP (\Delta \{ f_k \} _{k=1}^r , \Delta B)$
and 
$\MPCL (\Delta \{ f_k \} _{k=1}^r , \Delta B)$.

Consequently, 
we can carry appropriate invariants and theorems from
the preimage problem over this general setting.

\subsubsection*{Acknowledgements} 
The author is grateful to Professor S\,A\,Bogatyi 
and Professor A\,V\,Zarelua
for useful discussions, and to Professor D\,L\,Gon\c{c}alves
for sending important materials.
The author is obliged to the referee for his (her)
remarks which inspired to improve this article much,
and thanks the editors Professor M\,Scharlemann and R\,Weidmann
for their attentive attitude.

\bibliographystyle{gtart}
\bibliography{link}

\end{document}